\documentclass[11pt,oneside,reqno]{amsart}

\usepackage{amssymb}
\usepackage{algpseudocode}
\usepackage{algorithm}
\usepackage{verbatim}
\usepackage{cite}
\RequirePackage{dsfont} \setlength{\textwidth}{15.5cm}
\setlength{\textheight}{23.0cm} \setlength{\voffset}{-1.5cm}
\setlength{\hoffset}{-1.5cm} \addtolength{\headheight}{3.5pt}
\frenchspacing \scrollmode

\usepackage{graphicx}
\usepackage{epstopdf}
\usepackage[usenames]{color}
\definecolor{darkred}{RGB}{139,0,0}
\definecolor{darkgreen}{RGB}{0,100,0}
\definecolor{darkmagenta}{RGB}{139,0,139}
%\newcommand{\pl}[1]{{\color{black}{#1}}}
%\newcommand{\lp}[1]{{\color{black}{#1}}}
%\newcommand{\pl}[1]{{\color{red}{#1}}}
%\newcommand{\lp}[1]{{\color{blue}{#1}}}
%\newcommand{\todo}[1]{{\color{magenta}{#1}}}

%  This package prints the labels in the margin
%\usepackage[notref,notcite]{showkeys}

\long\def\symbolfootnote[#1]#2{\begingroup%
\def\thefootnote{\fnsymbol{footnote}}\footnote[#1]{#2}\endgroup}

\newcommand{\DEBUG}{}

\ifdefined\DEBUG%

  \def\rem#1{{\marginpar{\raggedright\scriptsize #1}}}
  \newcommand{\pmr}[1]{\rem{\color{blue}{$\bullet$ #1}}}
  \newcommand{\ppr}[1]{\rem{\color{red}{$\bullet$ #1}}}
 \else%

  \newcommand{\ppr}[1]{}
  \newcommand{\pmr}[1]{}
 \fi

\newcommand{\one}{\cdot\mathds{1}}

\newcommand{\xndfc}{{\tilde X_n^{df-M}}}

\newcommand{\xndfnc}{{\tilde {\bar X}_n^{df-RM}}}
\newcommand{\onei}{{\one_{[t_i, t_{i+1})}}}

\def\rho{\varrho}

\def\rd{\,{\mathrm d}}

\theoremstyle{plain}
\newtheorem{theorem}{Theorem}
\newtheorem{lemma}{Lemma}

\newtheorem{proposition}{Proposition}

\theoremstyle{definition}
\newtheorem{remark}{Remark}

%%%%%%%%%%%%%%%%%%%%%%%%%%%%%%%%%%%%%%%%%%%%%%%%%%%%%%%%%%%%%%%%%%%%

\begin{document}

\title
[Randomized Milstein algorithm under inexact information]
{Randomized derivative-free Milstein algorithm for efficient approximation of solutions of SDEs under noisy information}

%\author[]{}
%\address{}
%\email{}

%\textit{AGH University of Science and Technology,\\
%Faculty of Applied Mathematics,\\
%Al. Mickiewicza 30, 30-059 Krakow, Poland,\\
%E-mail:} \texttt{pprzybyl@agh.edu.pl, morkiszp@student.agh.edu.pl}

\author[P. Morkisz]{Pawe{\l} M. Morkisz}
\address{AGH University of Science and Technology,
Faculty of Applied Mathematics,
Al. A.~Mickiewicza 30, 30-059 Krak\'ow, Poland\newline
and\newline
NVIDIA Poland
}
\email{morkiszp@agh.edu.pl}

\author[P. Przyby{\l}owicz]{Pawe{\l} Przyby{\l}owicz}
\address{AGH University of Science and Technology,
Faculty of Applied Mathematics,
Al. A.~Mickiewicza 30, 30-059 Krak\'ow, Poland}
\email{pprzybyl@agh.edu.pl, corresponding author}

%Pawe{\l} Przyby{\l}owicz \symbolfootnote[7]{}, \ Pawe\l \ Morkisz \vspace{.5cm}

\begin{abstract}
We deal with pointwise approximation of solutions  of scalar stochastic differential equations in the presence of informational noise about underlying drift and diffusion coefficients. We define a randomized derivative-free version of Milstein algorithm $\mathcal{\bar A}^{df-RM}_n$ and investigate its error. We also study lower bounds on the error of an arbitrary algorithm. It turns out that in some case the scheme $\mathcal{\bar A}^{df-RM}_n$ is the optimal one. Finally, in order to test  the algorithm $\mathcal{\bar A}^{df-RM}_n$ in practice, we report performed numerical experiments.
\newline
\newline
\textbf{Key words:} SDEs, standard noisy information, pointwise approximation, randomized Milstein algorithm, $n$th minimal error, optimality
\newline
\textbf{Mathematics Subject Classification:} 68Q25, \ 65C30.

\end{abstract}
\maketitle

\tableofcontents
%%%%%%%%%%%%%%%%%%%%%%%%%%%%%%%%%%%%%%%%%%%%%%%%%%%%%%%%%%%%%%%%%%%%
%%%%%%%%%%%%%%%%%%%%%%%%%%%%%%
%%%%%%%%%%%%%%%%%%%%%%%%%%%%%%
\section{Introduction}\label{sec:intr}
In this paper we deal with pointwise approximation of solutions of the following scalar stochastic differential equations (SDEs) 
\begin{equation}
	\label{PROBLEM}
		\left\{ \begin{array}{ll}
			\rd X(t)=a(t,X(t))\rd t+ b (t,X(t))\rd W(t), &t\in [0,T], \\
			X(0)=\eta, 
		\end{array}\right.
\end{equation}
where $T>0$, $\eta$ is an initial-value, and $W=\{W(t)\}_{t\geq 0}$ is a standard one-dimensional Wiener process on some probability space $(\Omega,\Sigma,\mathbb{P})$. We will assume that only noisy evaluations of $a$ and $b$ are allowed. The aim is to find  an efficient approximation of $X(T)$ with the (asymptotic) error as small as possible.

The problem of approximation of solutions of SDEs under exact information about coefficients is well studied in the literature, see, for example, the standard reference \cite{KP}. Much less is known when values of drift and diffusion coefficients are corrupted by some noise. Therefore, in this paper we assume that evaluations of the underlying coefficients are permissible only at certain precision levels. Such a disturbance may be caused by, for example, measurement errors, rounding errors, and lowering a precision when performing computations on GPUs, see Remark \ref{GPU_model} and \cite{milvic}, \cite{Pla96} for further discussions and examples.

In literature there are many results on numerical problems under noisy information, such as integrating or approximation of  regular functions (\cite{JoC4, Pla96, KaPl90}), $L_p$ approximation of piecewise regular functions (\cite{MoPl16}), solutions of IVPs (\cite{KaPr16}) or PDEs (\cite{Wer96, Wer97}). For stochastic case we refer to \cite{PMPP17} and \cite{KaMoPr19} where the authors studied, respectively, approximation of SDEs under noisy information by randomized Euler scheme and approximate stochastic It\^o integration in the case when also the values of the Wiener process $W$ were inexact.

In this paper we extend the results  obtained in \cite{PMPP17}. Namely, we study approximation of solutions of SDEs by a randomized version of Milstein scheme under noisy information. For exact information such a version of the Milstein scheme was investigated in \cite{KRWU}. Here, however, we use its derivative free version in order to cover also the case of inexact information. Hence, our proof technique differs from that used in \cite{KRWU}.

We use a suitable computation setting that allows us to model the situation  when the values of $a$'s and $b$'s are perturbed by some deterministic noise, see \cite{PMPP17}.  Namely, let  $\delta_1, \delta_2 \in [0,1]$ be the precision levels corresponding to drift and the diffusion coefficients, respectively. (The case of $\delta_1 = \delta_2 = 0$ corresponds to the exact information.) Available standard information about each coefficient consists of  noisy evaluations of the coefficients at a finite number of points $(t_i, y_i) \in [0,T] \times \mathbb{R}$. This means that, for example, for the diffusion coefficient $b$ and for a given point $(t_i,y_i)\in[0,T]\times\mathbb{R}$  evaluation returns  $\tilde b(t_i,y_i)$ with the property that $| b(t_i,y_i) - \tilde b(t_i,y_i) | \leq \delta_2(1+|y_i|)$.  Moreover, as in \cite{PMPP17} for $a = a(t, y)$ we allow randomized choices of sample points with respect to the time variable $t$. For the Wiener process $W$ we assume that the information is exact, i.e.: it  is given by the values of $W$ at a finite number of points $s_k\in [0,T]$. (See, however, Remark \ref{inexact_Wiener}.) The error of an algorithm, using the information above, is measured in the $q$-th mean ($q\geq 1$) maximized over the class of input data $(a,b,\eta)$ and over all permissible information about $(a,b,\eta)$ with the given precisions $\delta_1, \delta_2\geq 0$.

Theorem \ref{main_theorem}, which is the main result of the paper,  states that the $n$th minimal error (under suitably regular informational noise)   is asymptotically equal to $\Theta(n^{-\min\{\frac{1}{2}+\gamma_1,\gamma_2\}} + \delta_1 + \delta_2)$ where the factors in $\Theta$  do not depend on $\delta_1$, $\delta_2$. (Here, $\gamma_1,\gamma_2\in (0,1]$ are the H\"older exponents, with respect to time variable, of drift and diffusion coefficients, respectively.) A randomized derivative-free version $\mathcal{\bar A}_n^{df-RM}$ of the classical Milstein  algorithm is defined which uses  noisy evaluations of the drift and diffusion coefficients, and attains the desired rate of convergence. When the disturbances for $a$ and $b$ are more rough, then  error term for the scheme $\mathcal{\bar A}_n^{df-RM}$ also depends on $\delta_2n^{1/2}$, see Theorem \ref{RMDF_NOIZZ_ERR} (ii). This implies that in order to obtain any convergence rate it is necessary to tend with both precision levels to zero suitably fast with respect to $n$. 

The paper is organized as follows. Section \ref{sec:prel} consists of the problem formulation, basic notions and definitions.
Randomized derivative-free  Milstein algorithm $\mathcal{\bar A}_n^{df-RM}$ under perturbed information together with upper bounds on its error are presented in Section \ref{sec:alg}. In Section \ref{sec:low} we show a lower bound on worst case error for an arbitrary algorithm  (Lemma \ref{lower_b_lem}). This leads to the conclusion that the randomized Milstein algorithm $\mathcal{\bar A}_n^{df-RM}$ is optimal (Theorem \ref{main_theorem}). Section \ref{sec:num} reports  numerical experiments performed for the algorithm $\mathcal{\bar A}_n^{df-RM}$. Finally, the Appendix contains an auxiliary facts used in the paper.
%%%%%%%%%%%%%
%%%%%%%%%%%%%
\section{Preliminaries} \label{sec:prel}
Let $T>0$. We denote by $\mathbb{N}=\{1,2,\ldots\}$. Let $W=\{W(t)\}_{t\geq 0}$ be a standard one-dimensional Wiener process on a complete probability space $(\Omega,\Sigma,\mathbb{P})$. We denote by $\{\Sigma_t\}_{t\geq 0}$ a filtration, satisfying the usual conditions, such that $W$ is a Wiener process on $(\Omega,\Sigma,\mathbb{P})$ with respect to $\{\Sigma_t\}_{t\geq 0}$. Let $\displaystyle{\Sigma_{\infty}=\sigma\Bigl(\bigcup_{t\geq 0}\Sigma_t\Bigr)}$. For a random variable $X:\Omega\to\mathbb{R}$ we write $\|X\|_q=(\mathbb{E}|X|^q)^{1/q}$, where $q\in [2,+\infty)$. For any $f\in C^{0,1}([0,T]\times\mathbb{R})$ by $L_1$ we mean the following differential operator  
\begin{displaymath}
 L_1 f(t,y) = f(t,y)\cdot \frac{\partial f}{\partial y}(t,y).
\end{displaymath}
We will also use its derivative-free version. Namely, for $f\in C([0,T]\times\mathbb{R})$ and $h>0$  the difference operator $\mathcal{L}_{1,h}$ is given as follows
\begin{displaymath}
	\mathcal{L}_{1,h}f(t,y)=f(t,y)\cdot\Delta_h f(t,y),
\end{displaymath}
where
\begin{displaymath}
\Delta_h f(t,y)=\frac{f(t,y+h)-f(t,y)}{h}.
\end{displaymath}
(Basic properties of $L_1$ and $\mathcal{L}_{1,h}$, used in the paper, are gathered in Appendix.)

Let $K>0$ and $\gamma\in (0,1]$. We say that $f:[0,T] \times \mathbb{R} \to \mathbb{R}$ belongs to the function class $F_K^{\gamma}$ iff for all $t,s\in[0,T]$ and all $y,z\in\mathbb{R}$ it satisfies the following assumptions:
\begin{itemize}
	\item [(i)] $f\in C^{0,2}\left([0,T]\times\mathbb{R}\right)$,
	\item [(ii)] $|f(0,0)| \leq K$,
	\item [(iii)] $|f(t,y) - f(t,z)| \leq K |y-z|$,
	\item [(iv)] $f(t,y) - f(s,y)| \leq K(1+|y|)|t-s|^{\gamma}$,
	\item [(v)] $\left| \frac{\partial f}{\partial y}(t,y) - \frac{\partial f}{\partial y}(t,z) \right| \leq K |y-z|$.
\end{itemize}
In this paper we will be considering drift coefficients $a$ from the following class
\begin{displaymath}
	\mathcal{A}^{\gamma_1}_K=\Biggl\{f \in F^{\gamma_1}_K \,\Bigl|\, \left| \frac{\partial f}{\partial y}(t,y) - \frac{\partial f}{\partial y}(s,y) \right| \leq K (1+|y|)|t-s|^{\gamma_1} \ \hbox{for all} \ t,s\in [0,T], y\in\mathbb{R}\Biggr\},
\end{displaymath}
while we will be assuming that diffusion coefficients are from 
\begin{displaymath}
 \mathcal{B}^{\gamma_2}_K = \Bigl\{ f \in F^{\gamma_2}_K \,\Bigl|\, |L_1 f(t,y) - L_1 f(t,z)|\leq K|y-z| \ \hbox{for all} \ t\in [0,T], y,z\in\mathbb{R}\Bigr\}.
 \end{displaymath}
Moreover, let
\begin{displaymath}
	\mathcal{J}^q_{K}=\{\eta:\Omega\to\mathbb{R} \ | \ \eta \ \hbox{is} \ \Sigma_0-\hbox{measurable}, \mathbb{E}|\eta|^{2q}\leq K\}.
\end{displaymath}
For $\gamma_1,\gamma_2\in (0,1]$, $q\in [2,+\infty)$, $K\in (0,+\infty)$ we consider the following class of admissible input data
\begin{displaymath}
	\mathcal{F}(\gamma_1,\gamma_2,q,K)=\mathcal{A}^{\gamma_1}_K\times\mathcal{B}^{\gamma_2}_K\times \mathcal{J}^q_{K}.
\end{displaymath}
For all $(a, b,\eta) \in \mathcal{F}(\gamma_1,\gamma_2,q,K)$ the equation \eqref{PROBLEM} has a unique strong solution $\{X(t)\}_{t\in [0,T]}$, that is adapted to $\{\Sigma_t\}_{t\in [0,T]}$, see, for example, \cite{KARSHR}. The numbers $T, K, q, \gamma_1, \gamma_2$ will be called parameters of the class $\mathcal{F}(\gamma_1,\gamma_2,q,K)$. Except for $T$ the parameters are, in general, not known and the algorithms presented later on will not use them as input parameters.

Under some minor modifications, we recall from \cite{PMPP17} a model of computation under inexact information about $a$'s and $b$'s. To do that we need to introduce the following auxiliary classes:
\begin{displaymath}
 \mathcal{K}^1 = \{ p:[0,T]\times\mathbb{R}\to\mathbb{R} \,|\, p(\cdot,\cdot) - \mbox{ Borel measurable }, |p(t,y)| \leq 1+|y|, t\in[0,T], y\in\mathbb{R}\},
\end{displaymath}
\begin{displaymath}
 \mathcal{K}^1_{\rm Lip} = \{ p\in\mathcal{K}^1 \,|\, |p(t,y)-p(t,z)| \leq |y-z|, t\in[0,T], y,z\in\mathbb{R}\},
\end{displaymath}
and
\begin{displaymath}
\mathcal{K}^2 = \{ p \in \mathcal{K}^1 \,|\, |p(t,y)| \leq 1, t\in[0,T], y\in\mathbb{R}\},
\end{displaymath}
see also \cite{KaMoPr19}. The classes $ \mathcal{K}^1_{\rm Lip}$, $\mathcal{K}^2$  are nonempty and contain constant functions. (This is an important fact  from a point of view of lower error bounds, see \cite{PMPP17}.) Let $\delta_1$, $\delta_{2}\in [0,1]$. We refer to $\delta_1$, $\delta_2$ as to precision parameters. For $a\in\mathcal{A}^{\gamma_1}_K$ we define the following class of corrupted drift coefficients
\begin{displaymath}
 V_a(\delta_1) = \{ \tilde a \,|\, \exists_{p_a \in \mathcal{K}^1} \,:\, \tilde a = a + \delta_1\cdot p_a \},
\end{displaymath}
while for  $b\in\mathcal{B}^{\gamma_2}_K$  we consider the following two classes of corrupted diffusion coefficients
\begin{displaymath}
 V^1_b(\delta_2) = \{ \tilde b \,|\, \exists_{p_b \in \mathcal{K}^1_{\rm Lip}} \,:\, \tilde b = b + \delta_2\cdot p_b \},
\end{displaymath}
and
\begin{displaymath}
 V^2_b(\delta_2) = \{ \tilde b \,|\, \exists_{p_b \in \mathcal{K}^2} \,:\, \tilde b = b + \delta_2\cdot p_b \}.
\end{displaymath}
Note that we impose more smoothness for corrupting functions $p_b$'s than for $p_a$'s. This is due to some technicalities, see Remark \ref{smooth_pb}.
We have that $V_a(\delta_1)\subset V_a(\delta_1^{'})$ for $0\leq \delta_1\leq\delta_1^{'}\leq 1$, and $V^i_b(\delta_2)\subset V^i_b(\delta_2^{'})$ for $0\leq \delta_2\leq\delta_2^{'}\leq 1$, $i=1,2$. We assume that the algorithm is based on discrete noisy information about $(a,b)$ and exact information about $W$, and $\eta$. Hence, a vector of noisy information has the following form
\begin{eqnarray*}
	\mathcal{N}(\tilde z(\delta_1,\delta_2))&=&\Bigl[\tilde a (\xi_0,y_0),\tilde a(\xi_1,y_1),\ldots,\tilde a(\xi_{i_1-1},y_{i_1-1}),\notag\\
		 && \ \  \tilde b(t_0,z_0), \tilde b(t_1,z_1),\ldots, \tilde b(t_{i_1-1},z_{i_1-1}), \notag\\
		 && \ \ \tilde b(t_0,u_0), \tilde b(t_1,u_1),\ldots, \tilde b(t_{i_1-1},u_{i_1-1}), \notag\\
		&& \ \ W(s_0),W(s_1),\ldots, W(s_{i_2-1}),\eta],
\end{eqnarray*}
where  $i_1,i_2\in\mathbb{N}$ and $(\xi_0,\xi_1,\ldots,\xi_{i_1-1})$ is a random vector on $(\Omega,\Sigma,\mathbb{P})$ which takes values in $[0,T]^{i_1}$. We assume that the $\sigma$-fields $\sigma(\xi_0,\xi_1,\ldots,\xi_{i_1-1})$ and $\Sigma_{\infty}$ are independent. Moreover,   $t_0,t_1,\ldots,t_{i_1-1}\in [0,T]$ and $s_0,s_1,\ldots,s_{i_2-1}\in [0,T]$ are given time points.
We  assume that $t_i\neq t_j$, $s_i\neq s_j$ for all $i\neq j$. The evaluation points $y_j$, $z_j$, $u_j$ for the spatial variables $y,z$ of $a(\cdot,y)$, $b(\cdot,z)$, and $b(\cdot,u)$ can be given in adaptive way with respect to $(a,b,\eta)$ and $W$. This means that  there exist Borel measurable mappings $\psi_0:\mathbb{R}^{i_2}\times\mathbb{R}\to\mathbb{R}^{3}$, $\psi_j:\mathbb{R}^{j}\times\mathbb{R}^{j}\times\mathbb{R}^{j}\times\mathbb{R}^{i_2}\times\mathbb{R}\to\mathbb{R}^{3}$, $j=1,2,\ldots,i_1-1$, such that the successive points $y_j,z_j$ are computed in the following way:
\begin{displaymath}
	(y_0,z_0,u_0)=\psi_0\Bigl(W(s_0),W(s_1),\ldots, W(s_{i_2-1}),\eta\Bigr), 
\end{displaymath}
where 
\begin{eqnarray*}
	(y_j,z_j,u_j)&=&\psi_j\Bigl(\tilde a(\xi_0,y_0), \tilde a(\xi_1,y_1),\ldots, \tilde a(\xi_{j-1},y_{j-1}),\notag\\
	&& \ \ \quad \tilde b(t_0,z_0), \tilde b(t_1,z_1),\ldots, \tilde b(t_{j-1},z_{j-1}),\notag\\
	&& \ \ \quad \tilde b(t_0,u_0), \tilde b(t_1,u_1),\ldots, \tilde b(t_{j-1},u_{j-1}),\notag\\
	&& \ \ \quad W(s_0), W(s_1),\ldots, W(s_{i_2-1}),\eta\Bigr), 
\end{eqnarray*}
for $j=1,2,\ldots, i_1-1$. The total number of (noisy) evaluations of $a$, $b$ and $W$ is equal to $l=3i_1+i_2$.
\\
Any algorithm $\mathcal{A}$ using $\mathcal{N}(\tilde a, \tilde b,\eta,W)$, that computes approximation to $X(T)$ is of the form
\begin{equation}
	\label{def_alg}
	\mathcal{A}(\tilde a, \tilde b,\eta,W,\delta_1,\delta_2)=\varphi(\mathcal{N}(\tilde a, \tilde b,\eta,W)),
\end{equation}
for some Borel measurable mapping $\varphi:\mathbb{R}^{3i_1+i_2+1}\to\mathbb{R}$. For a given $n\in\mathbb{N}$ we denote by $\Phi_n$ a class of all algorithms of the form \eqref{def_alg} for which the total number of evaluations $l$
is at most $n$. 

For a fixed $(a,b,\eta,)\in\mathcal{F}(\gamma_1,\gamma_2,q,K)$ the error of $\mathcal{A}\in\Phi_n$ is defined in the following way
\begin{displaymath}
e^{(q)} (\mathcal{A}, a,b,\eta, W, V^i, \delta_1, \delta_2) = \sup_{(\tilde a,\tilde b)\in V_a(\delta_1)\times V^i_b(\delta_2)} \| X(T) - \mathcal{A}(\tilde a, \tilde b, \eta, W, \delta_1,\delta_2)\|_{q}.
\end{displaymath}
for $i=1,2$, where $X(T)=X(a,b,\eta,W)(T)$. The worst-case error of the algorithm $\mathcal{A}$ is defined as
\begin{displaymath}
	e^{(q)} (\mathcal{A}, \mathcal{F}, W, V^i, \delta_1, \delta_2)=\sup\limits_{(a,b,\eta)\in\mathcal{F}}e^{(q)} (\mathcal{A}, a,b,\eta, W, V^i, \delta_1, \delta_2),
\end{displaymath}
where $\mathcal{F}$ is a certain subclass of $\mathcal{F}(\gamma_1,\gamma_2,q,K)$, see \cite{Pla96} and \cite{TWW88}. Hence, we are considering the worst error with respect to any $(\tilde a, \tilde b)$ that can be given to us for a fixed $(a,b)$. Finally, we define the $n$th minimal error as follows
\begin{displaymath}
	e_n^{(q)}(\mathcal{F}, W, V^i, \delta_1, \delta_2)=\inf\limits_{\mathcal{A}\in\Phi_n}e^{(q)} (\mathcal{A}, \mathcal{F}, W, V^i, \delta_1, \delta_2), \ i=1,2.
\end{displaymath}
Our aim is to find possibly sharp bounds on the $n$th minimal error $e_n^{(q)}(\mathcal{F}, W, V^i, \delta_1, \delta_2)$, i.e., lower and upper bounds which match up to constants. We are also interested in defining an algorithm for which the infimum in $e_n^{(q)}(\mathcal{F}, W, V^i, \delta_1, \delta_2)$ is asymptotically attained.

Unless otherwise stated, all constants appearing in this paper (including those in the "O", "$\Omega$", and "$\Theta$" notation) will only
depend on the parameters of the respective classes. Furthermore, the same symbol may be used for different constants.
%%%%%%%%%%%%%%
\begin{remark}
\label{GPU_model}  It is worth mentioning that the proposed computation and error setting includes the phenomenon of lowering precision of computations. Namely, we can model relative roundoff errors by considering disturbing functions $p_f$, $f\in\{a,b\}$, of the form
\begin{equation}
	p_f(t,y)=\alpha(t,y)\cdot f(t,y), \ (t,y)\in [0,T]\times\mathbb{R},
\end{equation}
for some function $\alpha$ that is Borel measurable and bounded on $[0,T]\times\mathbb{R}$. That is a frequent case for efficient computations using both CPUs and GPUs. An example could be the current state-of-the-art GPU - NVIDIA Tesla V100, which performance behaves as follows - 7 TeraFLOPS for {\it double precision}, 14 TeraFLOPS for {\it single precision}, and up to 112 TeraFLOPS for {\it half precision} of very specific type (repeatable operations of matrix multiplications and additions). We refer to \cite{KaMoPr19} where  Monte Carlo simulations were performed on GPUs.
\end{remark}
%%%%%%%%%%%%%%
\section{Randomized derivative-free Milstein algorithm for noisy information} \label{sec:alg}
Below we define the randomized derivative-free Milstein algorithm in presence of informational noise for $a$ and $b$. 
Let $n\in\mathbb{N}$ and let
\begin{equation}
	\label{discret}
	t_i = ih, \quad h=T/n, \quad i=0,1,\ldots,n,
\end{equation}
be the equidistant discretization on $[0,T]$. Moreover, we take
\begin{displaymath}
 \Delta W_i = W(t_{i+1}) - W(t_i), 
\end{displaymath}
\begin{displaymath}
	\mathcal{I}_{t_i, t}(W,W) = \int\limits_{t_i}^{t} \int\limits_{t_i}^{s} \mathrm{d}W(u) \mathrm{d}W(s) = \frac{1}{2} \Bigl((W(t)-W(t_i))^2 - (t-t_i)\Bigr),
\end{displaymath}
for $t\in [t_i,t_{i+1}]$, $i=0,1,\ldots,n-1$. Let $\{\xi_i\}_{i=0}^{n-1}$ be independent random variables on the probability space $(\Omega,\Sigma,\mathbb{P})$, such that the $\sigma$-fileds $\sigma(\xi_0,\xi_1,\ldots,\xi_{n-1})$ and $\Sigma_{\infty}$ are independent, with $\xi_i$ being uniformly distributed on $[t_i,t_{i+1}]$.
Then for any $(a,b,\eta) \in \mathcal{F}(\gamma_1,\gamma_2,q,K)$, $(\tilde a, \tilde b)\in V_a(\delta_1)\times (V^1_b(\delta_2)\cup V^2_b(\delta_2))$ we set
\begin{equation}
	\label{MIL_DF_NO}
		\left\{ \begin{array}{ll}
			\bar X^{df-RM}_n(0)  & =   \eta, \\
			\bar X^{df-RM}_n(t_{i+1})  & = \bar X^{df-RM}_n(t_i) + \tilde a(\xi_i, \bar X^{df-RM}_n(t_i)) \cdot\frac{T}{n} +  \tilde b(t_i, \bar X^{df-RM}_n(t_i))  \cdot \Delta W_i \\
				& +\mathcal{L}_{1,h}\tilde b(t_i, \bar X^{df-RM}_n(t_i))\cdot\mathcal{I}_{t_i, t_{i+1}}(W,W),
		\end{array}\right.
\end{equation}
for $i=0,1,\ldots,n-1$, where
\begin{displaymath}
	\mathcal{L}_{1,h}\tilde b(t_i, \bar X^{df-RM}_n(t_i))=\tilde b(t_i, \bar X^{df-RM}_n(t_i))\cdot\frac{\tilde b(t_i, \bar X^{df-RM}_n(t_i)+h)-\tilde b(t_i, \bar X^{df-RM}_n(t_i))}{h}.
\end{displaymath}
 The algorithm $\mathcal{\bar A}^{df-RM}_n$ is defined as
\begin{equation}
\label{RMDF_NOIZ_DEF}
	\mathcal{\bar A}^{df-RM}_n(\tilde a,\tilde b,\eta,W,\delta_1,\delta_2) := \bar X^{df-RM}_n(T). 
\end{equation} 
In the case of exact information (i.e., $\delta_1=\delta_2=0$) we write $X^{df-RM}_n$ and $\mathcal{A}^{df-RM}_n$ instead of $\bar X^{df-RM}_n$, and $\mathcal{\bar A}^{df-RM}_n$, respectively. The total number of evaluations of $a$, $b$, and $W$ used for computing $\mathcal{\bar A}^{df-RM}_n$ is $4n$. Therefore, $\mathcal{\bar A}^{df-RM}_n\in\Phi_{4n}$. Moreover, the combinatorial cost consists of $O(n)$ arithmetic operations.

In the following theorem we state upper bounds on the error of the randomized derivative-free Milstein scheme under noisy information about $a$ and $b$. 
\begin{theorem}
	\label{RMDF_NOIZZ_ERR}
	\begin{itemize}
		\item [(i)] There exists a positive constant $C$, depending only on the parameters of the class $\mathcal{F}(\gamma_1,\gamma_2,q,K)$, such that for all $n\in\mathbb{N}$, $\delta_1, \delta_2\in [0,1]$, $(a,b,\eta)\in \mathcal{F}(\gamma_1,\gamma_2,q,K)$, $(\tilde a, \tilde b)\in V_a(\delta_1)\times V^1_b(\delta_2)$ , we have
		\begin{displaymath}
		\|X(a,b,\eta,W)(T)-\mathcal{\bar A}_n^{df-RM}(\tilde a,\tilde b,\eta,W,\delta_1, \delta_2)\|_q \leq C\cdot\Bigl(n^{-\min\{\frac{1}{2}+\gamma_1,\gamma_2\}}+\delta_1+\delta_2\Bigr).
	\end{displaymath}
		\item [(ii)] There exist positive constants $C_1,C_2,C_3$, depending only on the parameters of the class $\mathcal{F}(\gamma_1,\gamma_2,q,K)$ and $q$, such that for all $n\in\mathbb{N}$, $\delta_1,\delta_2\in [0,1]$, $(a,b,\eta)\in \mathcal{F}(\gamma_1,\gamma_2,q,K)$, $(\tilde a, \tilde b)\in V_a(\delta_1)\times V^2_b(\delta_2)$ , we have
\begin{eqnarray*}
		&&\|X(a,b,\eta,W)(T)-\mathcal{\bar A}_n^{df-RM}(\tilde a,\tilde b,\eta,W,\delta_1, \delta_2)\|_q \leq C_1\cdot n^{-\min\{\frac{1}{2}+\gamma_1,\gamma_2\}}\\
&&+C_2 \cdot e^{C_3(\delta_2n^{1/2})^{q}} \cdot (1+\delta_2n^{1/2}) \cdot \Bigl(\delta_1+\delta_2 n^{1/2}+n^{-3/2}\Bigr).
	\end{eqnarray*}
	\end{itemize}
\end{theorem}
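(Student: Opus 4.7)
The plan is to decompose the error via the triangle inequality
$$\|X(T) - \mathcal{\bar A}_n^{df-RM}\|_q \leq \|X(T) - \mathcal{A}_n^{df-RM}\|_q + \|\mathcal{A}_n^{df-RM} - \mathcal{\bar A}_n^{df-RM}\|_q,$$
where $\mathcal{A}_n^{df-RM}$ denotes the same scheme driven by exact evaluations of $a$ and $b$ along the same Wiener path $W$ and the same random times $\{\xi_i\}$ as $\mathcal{\bar A}_n^{df-RM}$. The first term depends only on $n$ and is identical in (i) and (ii); the second term alone carries the dependence on the corrupting functions $p_a$, $p_b$ and is where the two cases diverge.

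For the first term the aim is $\|X(T) - \mathcal{A}_n^{df-RM}\|_q = O(n^{-\min\{1/2+\gamma_1,\gamma_2\}})$. I would expand $X(t_{i+1})-X(t_i)$ via the It\^o--Taylor formula and compare it term by term with the one-step recursion in (\ref{MIL_DF_NO}). Three local errors then appear: (a) the randomized drift quadrature, where because $\xi_i$ is uniform on $[t_i,t_{i+1}]$ and independent of $\Sigma_\infty$, the conditional mean of $a(\xi_i,\cdot)$ is $h^{-1}\int_{t_i}^{t_{i+1}}a(s,\cdot)\rd s$; the resulting martingale-difference sequence has $L^q$-norm $O(n^{-1/2-\gamma_1})$ by BDG combined with (iv) and the extra regularity of $\partial a/\partial y$ required by $\mathcal{A}^{\gamma_1}_K$; (b) the finite-difference bias $\mathcal{L}_{1,h}b - L_1 b = O(h)$ coming from (v) of $F^{\gamma_2}_K$, which paired with $\mathcal{I}_{t_i,t_{i+1}}(W,W)$ of size $h$ is of strictly lower order; (c) the remaining Milstein stochastic remainder, of order $h^{\gamma_2}$ per step via It\^o isometry and the $\gamma_2$-H\"older time regularity of $b$. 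A discrete Gr\"onwall argument in $L^q$, using Lipschitzness of $a$, $b$, and of $\mathcal{L}_{1,h}b$ (the latter from the Appendix), closes the bound.

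For the noise-induced term, set $e_i = \bar X_n^{df-RM}(t_i) - X_n^{df-RM}(t_i)$, which satisfies
$$e_{i+1} = e_i + A_i\,h + B_i\,\Delta W_i + C_i\,\mathcal{I}_{t_i,t_{i+1}}(W,W),$$
with $A_i$, $B_i$, $C_i$ the corresponding coefficient differences. The plan is to split each of $A_i, B_i, C_i$ into a stability part proportional to $|e_i|$ (via Lipschitzness in $y$) and a pure-noise part. The pure noise in $A_i, B_i$ is of size $\delta_1(1+|\bar X|)$ and $\delta_2(1+|\bar X|)$ respectively, and after BDG and summation yields the $\delta_1$ and $\delta_2$ contributions in the statement. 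For $C_i$ the crucial identity
$$\mathcal{L}_{1,h}\tilde b - \mathcal{L}_{1,h}b = \delta_2\, p_b\,\Delta_h\tilde b + \delta_2\, b\,\Delta_h p_b$$
separates (i) from (ii): if $p_b\in\mathcal{K}^1_{\rm Lip}$ then $|\Delta_h p_b|\le 1$ and the pure noise in $C_i$ is $O(\delta_2(1+|\bar X|))$, giving an $O(\delta_2)$ contribution after BDG, which proves (i); if only $p_b\in\mathcal{K}^2$ we have $|\Delta_h p_b|\le 2/h$, so $|C_i|\lesssim \delta_2 n(1+|\bar X|)$, and BDG applied to $\sum_i C_i\,\mathcal{I}_{t_i,t_{i+1}}(W,W)$ together with $\|\mathcal{I}_{t_i,t_{i+1}}(W,W)\|_q \sim h$ produces exactly the $\delta_2 n^{1/2}$ factor visible in (ii).

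The main obstacle is part (ii): the Gr\"onwall recursion on $\|e_i\|_q$ now has a multiplicative constant that, through the a priori bound on $\|\bar X_n^{df-RM}(t_i)\|_q$ (which itself inherits the $\delta_2 n^{1/2}$ scaling through $\Delta_h p_b$), is of order $\delta_2 n^{1/2}$; iterating this recursion through $n$ steps produces precisely the exponential factor $\exp(C_3(\delta_2 n^{1/2})^q)$ together with the $(1+\delta_2 n^{1/2})$ prefactor. The residual $n^{-3/2}$ inside the last parenthesis traces back to a higher-order Taylor remainder retained when expressing $\mathcal{L}_{1,h}b$ in terms of $L_1 b$ against the quadratic Wiener functional. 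Everything else reduces to careful bookkeeping of these contributions.
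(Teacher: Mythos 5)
Your proposal follows essentially the same route as the paper's proof: a triangle inequality through the exact-information scheme (the paper inserts one further intermediate object, the randomized Milstein scheme with the exact derivative $L_1b$, and passes $X\to\tilde X_n^{RM}\to\tilde X_n^{df-RM}\to\tilde{\bar X}_n^{df-RM}$, but that is only an organizational difference), the identity $\mathcal{L}_{1,h}\tilde b-\mathcal{L}_{1,h}b=\delta_2\,p_b\,\Delta_h\tilde b+\delta_2\,b\,\Delta_h p_b$ with the dichotomy $|\Delta_h p_b|\le 1$ versus $|\Delta_h p_b|\le 2h^{-1}$ (Lemma \ref{DIV_DIFF_PROP2}), and Gronwall. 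One correction to how you locate the exponential factor in (ii): the Gronwall constant of the recursion on $\|e_i\|_q$ is $O(1)$, because all stability terms are estimated through the exact, Lipschitz $b$ --- which is essential in case (ii), where $p_b\in\mathcal{K}^2$ is merely bounded and $\tilde b$, hence $\mathcal{L}_{1,h}\tilde b$, need not be Lipschitz in $y$; if you literally took a multiplicative constant of order $\delta_2 n^{1/2}$ in that recursion you would get $\exp(C\delta_2 n^{1/2})$, not $\exp(C_3(\delta_2 n^{1/2})^{q})$. The exponential enters only through the a priori $q$-th-moment bound of the \emph{noisy} scheme (Lemma \ref{BoundRanMilNo}(iii)), whose Gronwall constant per unit time is $(1+\delta_2^q)(1+\delta_2^q h^{-q/2})\sim(\delta_2 n^{1/2})^{q}$ --- which is also why the exponent carries the $q$-th power.
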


The aim of this section is to justify Theorem \ref{RMDF_NOIZZ_ERR}. Before we do that we need to prove several auxiliary results concerning, in particular, upper bounds on error of the following time-continuous version of the randomized derivative-free Milstein algorithm $\mathcal{\bar A}^{df-RM}_n$ in presence of noise. Namely, let us take
\begin{equation}
	\label{MIL_DFNC}
		\left\{ \begin{array}{ll}
			\xndfnc(0)  & =   \eta, \\
			\xndfnc (t)  & = \xndfnc(t_i) + \tilde a(\xi_i, \xndfnc(t_i)) \cdot (t-t_i) \\
				&\quad +  \tilde b(t_i, \xndfnc(t_i))  \cdot (W(t)-W(t_i)) \\
				&\quad +\mathcal{L}_{1,h}\tilde b(t_i, \xndfnc(t_i))\cdot\mathcal{I}_{t_i, t}(W,W),
		\end{array}\right.
\end{equation}
for $t\in [t_i,t_{i+1}]$ and $i=0,1,\ldots,n-1$. In the case of exact information we write $\tilde X_n^{df-RM}=\{\tilde X_n^{df-RM}(t)\}_{t\in [0,T]}$ instead of $\xndfnc =\{ \xndfnc (t) \}_{t\in[0,T]}$. It holds $\xndfnc(t_i)=\bar X^{df-RM}_n(t_i)$ for $i=0,1,\ldots,n$. Hence, it is sufficient to analyze the error of $\xndfnc$. We also extend the filtration $\{\Sigma_t\}_{t\geq 0}$ in the same way as in \cite{PMPP17}. Namely, let $\mathcal{G}^n=\sigma(\xi_0,\xi_1,\ldots,\xi_{n-1})$ and $\tilde\Sigma_t^n=\sigma\Bigl(\Sigma_t\cup\mathcal{G}^n\Bigr)$, $t\geq 0$. Since the $\sigma$-fields $\Sigma_{\infty}$ and $\mathcal{G}^n$ are independent, the process $W$ is still a one-dimensional Wiener process on $(\Omega,\Sigma,\mathbb{P})$ with respect to $\{\tilde\Sigma_t^n\}_{t\geq 0}$. In the sequel we will consider stochastic It\^o integrals with respect to $W$ of processes that are adapted to the filtration $\{\tilde\Sigma_t^n\}_{t\geq 0}$. In particular, the following technical lemma assures suitable measurability of the process $\xndfnc =\{ \xndfnc (t) \}_{t\in[0,T]}$ with respect to $\{\tilde\Sigma_t^n\}_{t\geq 0}$.
%%%%%%%%%%%%%%%%%%%% 
\begin{lemma}
	\label{PROG_RMDFNO_PROC} Let $n\in\mathbb{N}$, $\delta_1,\delta_2\in [0,1]$, $(a,b,\eta)\in\mathcal{F}(\gamma_1,\gamma_2,q,K)$ and $(\tilde a,\tilde b)\in V_a(\delta_1)\times (V^1_b(\delta_2)\cup V^2_b(\delta_2))$. Then the process  $\xndfnc =\{ \xndfnc (t) \}_{t\in[0,T]}$ is progressively measurable with respect to the filtration $\{  \tilde\Sigma_t^n\}_{t\geq 0}$.
\end{lemma}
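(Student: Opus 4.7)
The plan is to proceed by induction on the grid index $i$, showing at step $i$ that the restriction of $\xndfnc$ to $[0,t_{i+1}]$ is progressively measurable with respect to $\{\tilde\Sigma_t^n\}_{t\geq 0}$. Before starting the induction, I would collect three basic measurability facts that will be used repeatedly: (a) $\eta$ is $\Sigma_0$-measurable, hence $\tilde\Sigma_0^n$-measurable; (b) each $\xi_i$ is $\mathcal{G}^n$-measurable and therefore $\tilde\Sigma_t^n$-measurable for every $t\geq 0$; (c) as already observed just before the lemma, $W$ remains a Wiener process with respect to $\{\tilde\Sigma_t^n\}_{t\geq 0}$, so in particular it is progressively measurable for this enlarged filtration. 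I would also record that $\tilde a$ and $\tilde b$ are Borel measurable on $[0,T]\times\mathbb{R}$, since by construction the noise functions $p_a$, $p_b$ lie in $\mathcal{K}^1$ (respectively $\mathcal{K}^1_{\rm Lip}$ or $\mathcal{K}^2$), whose members are Borel measurable by definition.

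For the base case $i=0$, on $[0,t_1]$ the defining formula \eqref{MIL_DFNC} expresses $\xndfnc(t)$ as a sum of terms involving $\eta$, the random variable $\xi_0$, the increment $W(t)-W(0)$, and $\mathcal{I}_{0,t}(W,W)=\tfrac12((W(t)-W(0))^2-t)$, multiplied by the coefficients $\tilde a(\xi_0,\eta)$, $\tilde b(0,\eta)$, and $\mathcal{L}_{1,h}\tilde b(0,\eta)$. Composing the Borel measurable functions $\tilde a$, $\tilde b$ with $\tilde\Sigma_0^n$-measurable inputs gives $\tilde\Sigma_0^n$-measurable coefficients; combined with the pathwise continuity of the process on $[0,t_1]$ and the progressive measurability of the $t$-dependent factors, a standard product-measurability argument for continuous adapted processes yields progressive measurability of $\xndfnc$ on $[0,t_1]$.

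For the inductive step, assume the claim holds on $[0,t_i]$; then $\xndfnc(t_i)$ is $\tilde\Sigma_{t_i}^n$-measurable. Arguing exactly as in the base case but with $t_i$ in place of $0$, the coefficients $\tilde a(\xi_i,\xndfnc(t_i))$, $\tilde b(t_i,\xndfnc(t_i))$ and $\mathcal{L}_{1,h}\tilde b(t_i,\xndfnc(t_i))$ are $\tilde\Sigma_{t_i}^n$-measurable, while $(t-t_i)$, $W(t)-W(t_i)$ and $\mathcal{I}_{t_i,t}(W,W)$ are progressively measurable on $[t_i,t_{i+1}]$. Pathwise continuity on $[t_i,t_{i+1}]$ together with adaptedness then yields progressive measurability on this subinterval, and concatenating with the inductive hypothesis one obtains progressive measurability on $[0,t_{i+1}]$. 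After $n$ steps the claim holds on $[0,T]$.

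The only subtleties to flag are (i) that $\xi_i$ is not $\Sigma_{t_i}$-measurable but only $\mathcal{G}^n$-measurable, which is precisely why the filtration is enlarged to $\tilde\Sigma_t^n=\sigma(\Sigma_t\cup\mathcal{G}^n)$, and (ii) that Borel measurability of $\tilde a$ and $\tilde b$ is used implicitly and must be read off from the definitions of the noise classes. Beyond that the argument is routine: composition of measurable maps plus pathwise continuity on each subinterval, and I do not expect any step to require more than careful bookkeeping.
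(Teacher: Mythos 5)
Your argument is correct and matches the route the paper intends: the paper dispenses with the proof by remarking that the lemma ``follows from induction and Proposition 1.13 in \cite{KARSHR}'' (the fact that an adapted process with continuous paths is progressively measurable), which is precisely the induction-on-subintervals-plus-continuity argument you spell out. Your additional bookkeeping on the Borel measurability of $\tilde a,\tilde b$ and the role of the enlarged filtration $\tilde\Sigma^n_t=\sigma(\Sigma_t\cup\mathcal{G}^n)$ is accurate and fills in exactly the details the paper leaves implicit.
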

The lemma above follows from induction and Proposition 1.13 in \cite{KARSHR}. Hence, we skip its proof.

In order to justify Theorem \ref{RMDF_NOIZZ_ERR} we proceed as follows. First, in Section \ref{sec:alg_RandMilstein} we investigate the error of the randomized version of the classical Milstein algorithm when  information about $a$ and $b$ is exact. Then, in Section \ref{sec:Perf_randdfMilstein} we show upper bounds for the derivative-free version of the randomized Milstein scheme also for exact information about $a$ and $b$. Finally, combining results obtained for these two methods we show the upper bounds on the error of $\bar X^{df-RM}$ in the presence of informational noise. 
%%%%%%%%%%%%
\begin{remark}
\label{smooth_pb}
It is natural to ask about a version of Theorem \ref{RMDF_NOIZZ_ERR}  when  corrupting functions $p_b$ are from $\mathcal{K}^1$, as it is for $p_a$'s. However, in this case we were unable to show any nontrivial upper bound for the algorithm $\mathcal{\bar A}_n^{df-RM}$. It turns out that for $p_b\in\mathcal{K}^1$ the function $(t,y)\to\mathcal{L}_{1,h}\tilde b(t,y)$ might be of super-linear growth with respect to $y$. Hence, we conjecture that some modification of the scheme $\mathcal{\bar A}_n^{df-RM}$ is needed in order to obtain analogous bounds as in Theorem \ref{RMDF_NOIZZ_ERR}. We postpone this problem to our future work.
\end{remark}
%%%%%%%%%%%%%%
\begin{remark}
\label{inexact_Wiener}
In \cite{KaMoPr19} the authors considers approximate stochastic It\^o integration in the case when the values of the Wiener process are corrupted by informational noise. Preliminary estimates suggest that direct application of techniques used in \cite{KaMoPr19} to approximation of SDEs, under inexact information about $W$, is not possible. Thereby, further investigation in that direction is  needed. 
\end{remark}
%%%%%%%%%%%%%%
\subsection{Performance of randomized  Milstein algorithm for exact information} \label{sec:alg_RandMilstein}
\noindent\newline
By randomizing evaluations of the drift coefficient $a$ in the classical Milstein scheme, we arrive at the following randomized Milstein algorithm. Take
\begin{equation}
	\label{RANMIL}
		\left\{ \begin{array}{ll}
			  X_n^{RM}(0)  & =   \eta, \\
			  X_n^{RM} (t_{i+1})  & = X_n^{RM}(t_i) + a(\xi_i, X_n^{RM}(t_i)) \cdot\frac{T}{n} + b(t_i, X_n^{RM}(t_i))  \cdot \Delta W_i \\
				& +L_1b(t_i, X_n^{RM}(t_i))\cdot \mathcal{I}_{t_i, t_{i+1}}(W,W),
		\end{array}\right.
\end{equation}
for $i=0,1,\ldots,n-1$. The algorithm $\mathcal{A}^{RM}_n$ is defined as
\begin{equation}
\label{RM_DEF}
	\mathcal{A}^{RM}_n(a,b,\eta,W,0,0) := X^{RM}_n(T). 
\end{equation} 
Note that $\mathcal{A}^{RM}_n\notin\Phi_n$, since it uses values  of the partial derivative of $b$. We refer to $\mathcal{A}^{RM}_n$ as to an auxiliary method that helps us to estimate the error of $\mathcal{\bar A}_n^{df-RM}$.
 
In order to investigate the error of the method $\mathcal{A}^{RM}_n$ we define the following time-continuous version of the scheme $X^{RM}_n$ as follows:
\begin{equation}
	\label{RANMIL_CONT}
		\left\{ \begin{array}{ll}
			\tilde X_n^{RM}(0)  =  &  \eta, \\
			\tilde X_n^{RM} (t)  = & \tilde X_n^{RM}(t_i) + a(\xi_i, \tilde X_n^{RM}(t_i)) \cdot (t-t_i) \\
&+ b(t_i, \tilde X_n^{RM}(t_i)) \cdot (W(t) - W(t_i)) \\ 
& +L_1 b(t_i, \tilde X_n^{RM}(t_i))\cdot \mathcal{I}_{t_i, t}(W,W),
		\end{array}\right.
\end{equation}
for $t\in[t_i, t_{i+1}]$ and for $i=0,1,\ldots,n-1$. We have that  $\tilde X_n^{RM} (t_i) = X_n^{RM} (t_i)$ for all $0\leq i \leq n$. Hence, it is sufficient to analyze the error of time-continuous version of the algorithm. Moreover, for the process $\{\tilde X_n^{RM} (t)\}_{t\in [0,T]}$ the following version of Lemma \ref{PROG_RMDFNO_PROC} holds.
\begin{lemma}
	\label{PROG_RM_PROC} Let $n\in\mathbb{N}$, $(a,b,\eta)\in\mathcal{F}(\gamma_1,\gamma_2,q,K)$. Then the process  $\tilde X^{RM}_n=\{ \tilde X_n^{RM}(t) \}_{t\in[0,T]}$ is progressively measurable with respect to the filtration $\{\tilde\Sigma_t^n\}_{t\geq 0}$.
\end{lemma}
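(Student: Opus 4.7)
The plan is to mirror the argument the authors have sketched for Lemma \ref{PROG_RMDFNO_PROC}: proceed by induction on the subinterval index $i$ and at each step use the fact that a continuous $\{\tilde\Sigma^n_t\}$-adapted process is progressively measurable (Proposition 1.13 in \cite{KARSHR}).

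First, I would observe that the filtration $\{\tilde\Sigma_t^n\}_{t\geq 0}$ has two key properties already recorded in the paper: $W$ remains a Wiener process with respect to it (since $\mathcal{G}^n$ is independent of $\Sigma_\infty$), so in particular $W$ is $\{\tilde\Sigma_t^n\}$-adapted and pathwise continuous; and each random variable $\xi_i$ is $\mathcal{G}^n$-measurable, hence $\tilde\Sigma^n_0$-measurable, so $\xi_i$ is $\tilde\Sigma^n_t$-measurable for every $t \geq 0$. Consequently any continuous process that is $\{\tilde\Sigma^n_t\}$-adapted on $[0,T]$ is automatically progressively measurable.

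Next, I would run the induction. The base case on $[0,t_1]$: $\tilde X_n^{RM}(0)=\eta$ is $\Sigma_0\subset \tilde\Sigma_0^n$-measurable, so the three "frozen-coefficient" quantities $a(\xi_0,\eta)$, $b(t_0,\eta)$ and $L_1 b(t_0,\eta)$ are well-defined $\tilde\Sigma_0^n$-measurable random variables (here I use that $a,b\in C^{0,2}$, in particular Borel measurable, and that $\xi_0$ is $\tilde\Sigma_0^n$-measurable). Then for $t\in[0,t_1]$ the process
\begin{displaymath}
\tilde X_n^{RM}(t)=\eta+a(\xi_0,\eta)\cdot t+b(t_0,\eta)\cdot W(t)+L_1 b(t_0,\eta)\cdot\tfrac{1}{2}\bigl(W(t)^2-t\bigr)
\end{displaymath}
is a sum of continuous $\{\tilde\Sigma^n_t\}$-adapted processes, hence continuous and adapted, hence progressively measurable. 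For the inductive step, assume the assertion holds on $[0,t_i]$; then $\tilde X_n^{RM}(t_i)$ is $\tilde\Sigma^n_{t_i}$-measurable, and on $[t_i,t_{i+1}]$ one repeats the same reasoning with $\eta$ replaced by $\tilde X_n^{RM}(t_i)$ and with the explicit expression $\mathcal{I}_{t_i,t}(W,W)=\tfrac{1}{2}((W(t)-W(t_i))^2-(t-t_i))$, which is clearly continuous in $t$ and $\{\tilde\Sigma^n_t\}$-adapted. Gluing the pieces together gives a continuous adapted process on the whole of $[0,T]$.

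There is no real obstacle here; the only point requiring a tiny bit of care is verifying that all the frozen coefficient values such as $a(\xi_i,\tilde X_n^{RM}(t_i))$ and $L_1 b(t_i,\tilde X_n^{RM}(t_i))$ are genuinely $\tilde\Sigma^n_{t_i}$-measurable — this uses the Borel measurability of $a$, $b$ and $\partial b/\partial y$ (guaranteed by $a,b\in C^{0,2}$) together with the fact that $\xi_i$ has been merged into the filtration. Since the authors explicitly invoke "induction and Proposition 1.13 in \cite{KARSHR}" for the analogous Lemma \ref{PROG_RMDFNO_PROC}, I would likewise simply state that the proof is identical up to replacing $\tilde a,\tilde b,\mathcal{L}_{1,h}\tilde b$ by $a,b,L_1 b$, and omit further details.
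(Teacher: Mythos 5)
Your proposal is correct and follows exactly the route the paper intends: the authors give no separate proof of Lemma \ref{PROG_RM_PROC}, presenting it as the exact analogue of Lemma \ref{PROG_RMDFNO_PROC}, whose proof they describe as "induction and Proposition 1.13 in \cite{KARSHR}" — i.e.\ the piecewise continuity-plus-adaptedness argument you spell out. Your filled-in details (measurability of the frozen coefficients via Borel measurability of $a$, $b$, $\partial b/\partial y$ and the $\tilde\Sigma_0^n$-measurability of the $\xi_i$) are accurate and complete.
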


We have the following result for the algorithm $\mathcal{A}^{RM}_n$.
\begin{proposition}
	\label{PROP_RM_ERR} 
There exists a positive constant $C$, depending only on the parameters of the class $\mathcal{F}(\gamma_1,\gamma_2,q,K)$, such that for all $n\in\mathbb{N}$ and all $(a,b,\eta)\in\mathcal{F}(\gamma_1,\gamma_2,q,K)$ we have
\begin{equation}
\label{est_XXRM}
	\sup\limits_{t\in [0,T]}\|X(t)-\tilde X^{RM}_n(t)\|_q\leq Cn^{-\min\{\frac{1}{2}+\gamma_1,\gamma_2\}},
\end{equation}
and, in particular, 
	\begin{displaymath}
		\|X(T)-\mathcal{A}_n^{RM}(a,b,\eta,W,0,0)\|_q \leq C n^{-\min\{\frac{1}{2}+\gamma_1,\gamma_2\}}.
	\end{displaymath}
\end{proposition}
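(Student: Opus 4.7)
The plan is to prove the continuous bound \eqref{est_XXRM} by a one-step consistency analysis combined with Gronwall's inequality, from which the estimate at $T$ follows as a special case. On each subinterval $[t_i,t_{i+1}]$ I would write the true solution as
\begin{equation*}
X(t) = X(t_i) + \int_{t_i}^{t} a(s,X(s))\,ds + \int_{t_i}^{t} b(s,X(s))\,dW(s),
\end{equation*}
subtract the scheme \eqref{RANMIL_CONT}, and decompose the resulting difference into three pieces: (a) a \emph{propagation} part involving $X(t_i) - \tilde X_n^{RM}(t_i)$ plus drift/diffusion increments with spatially Lipschitz differences of $a$ and $b$; (b) a \emph{randomization} error obtained by replacing $\int_{t_i}^{t_{i+1}} a(s,\tilde X_n^{RM}(t_i))\,ds$ with $h\,a(\xi_i,\tilde X_n^{RM}(t_i))$; and (c) a \emph{Milstein consistency} error from the approximation of $\int_{t_i}^{t} b(s,X(s))\,dW(s)$ by $b(t_i,\tilde X_n^{RM}(t_i))(W(t)-W(t_i))+L_1 b(t_i,\tilde X_n^{RM}(t_i))\mathcal{I}_{t_i,t}(W,W)$.

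For the randomization piece, because $\xi_i$ is uniform on $[t_i,t_{i+1}]$ and independent of $\Sigma_{t_i}$, the quantity
\begin{equation*}
R_i := h\,a(\xi_i,\tilde X_n^{RM}(t_i)) - \int_{t_i}^{t_{i+1}} a(s,\tilde X_n^{RM}(t_i))\,ds
\end{equation*}
satisfies $\mathbb{E}[R_i \mid \tilde\Sigma_{t_i}^n] = 0$ (using Lemma \ref{PROG_RM_PROC} for the required measurability), and by condition (iv) for the class $\mathcal{A}_K^{\gamma_1}$ combined with a routine a-priori $L^{2q}$-bound on $\tilde X_n^{RM}(t_i)$ one gets $\|R_i\|_q \leq C h^{1+\gamma_1}$. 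Burkholder--Davis--Gundy applied to the discrete martingale $\sum_i R_i$ then produces a global contribution of order $\sqrt{n}\cdot h^{1+\gamma_1} = C n^{-1/2-\gamma_1}$. For the Milstein consistency piece, I would apply It\^o's formula to $s \mapsto b(s,X(s))$ (permissible since $b \in C^{0,2}$) to obtain
\begin{equation*}
\int_{t_i}^{t} b(s,X(s))\,dW(s) = b(t_i,X(t_i))(W(t)-W(t_i)) + \int_{t_i}^{t}\!\!\int_{t_i}^{s} L_1 b(u,X(u))\,dW(u)\,dW(s) + \rho_i(t),
\end{equation*}
where $\rho_i(t)$ is a Lebesgue integral easily bounded via condition (iv) and standard moment estimates. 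Subtracting the scheme's Milstein term turns the double stochastic integral into one with integrand $L_1 b(u,X(u)) - L_1 b(t_i,\tilde X_n^{RM}(t_i))$, which by the $\gamma_2$-H\"older time regularity of $b$ and the Lipschitz-in-$y$ assumption on $L_1 b$ built into $\mathcal{B}_K^{\gamma_2}$ is controllable, via It\^o's isometry, by $Ch^{1+\gamma_2} + Ch\cdot\|X(t_i)-\tilde X_n^{RM}(t_i)\|_q^2$ (in the $\|\cdot\|_q^2$-sense).

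Combining the three types of local errors, using (iii) and (v) to bound the propagation terms by $Ch\cdot\|X(s)-\tilde X_n^{RM}(s)\|_q$, and raising to the $q$-th power (invoking BDG and H\"older to keep $\|\cdot\|_q$ on both sides) leads to an inequality of the form
\begin{equation*}
\|X(t)-\tilde X_n^{RM}(t)\|_q^q \leq C n^{-q\min\{\frac{1}{2}+\gamma_1,\,\gamma_2\}} + C\int_0^t \|X(s)-\tilde X_n^{RM}(s)\|_q^q\,ds,
\end{equation*}
to which Gronwall's lemma applies, yielding the desired bound uniformly in $t\in[0,T]$. The main obstacle I anticipate is handling the randomization and the It\^o/Milstein expansion simultaneously: the first relies on the martingale-difference structure with respect to $\{\tilde\Sigma_t^n\}$, while the second uses It\^o calculus for integrands adapted to $\{\Sigma_t\}$. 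Lemma \ref{PROG_RM_PROC} reconciles the two, but one must take care that the extra $\gamma_1$-factor is harvested \emph{only after summation} (through BDG), since a per-step bound would merely give $n^{-1}$ and lose the $\gamma_1$-gain that is essential for the claimed exponent $\min\{\tfrac{1}{2}+\gamma_1,\gamma_2\}$.
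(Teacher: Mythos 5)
Your overall architecture (decompose the local error into propagation, randomization and Milstein consistency, then close with Gronwall) matches the paper's, but three steps fail as written. The martingale claim for the randomization term is false: since $\tilde\Sigma_{t_i}^n=\sigma\bigl(\Sigma_{t_i}\cup\mathcal{G}^n\bigr)$ with $\mathcal{G}^n=\sigma(\xi_0,\ldots,\xi_{n-1})$, the variable $\xi_i$ is $\tilde\Sigma_{t_i}^n$-measurable, so $\mathbb{E}[R_i\mid\tilde\Sigma_{t_i}^n]=R_i\neq 0$. You need a filtration that reveals $\xi_j$ only for $j<i$; the paper instead splits off the propagation difference $a(\xi_i,X(t_i))-a(\xi_i,\tilde X_n^{RM}(t_i))$ first, evaluates the randomization error at the $\Sigma_\infty$-measurable $X(t_i)$, conditions on $\Sigma_\infty$, and uses that the resulting $\tilde Y_k$ are independent and centered. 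Next, It\^o's formula applied to $s\mapsto b(s,X(s))$ is not permissible for $b\in C^{0,2}$: it requires differentiability in time, which here is replaced by mere $\gamma_2$-H\"older continuity. Worse, even granting your expansion, the integrand $L_1b(u,X(u))-L_1b(t_i,\cdot)$ cannot be controlled, because its time increment involves $\frac{\partial b}{\partial y}(u,y)-\frac{\partial b}{\partial y}(t_i,y)$ and no time regularity of $\frac{\partial b}{\partial y}$ is assumed in $\mathcal{B}^{\gamma_2}_K$ (such a condition is imposed only on the drift class $\mathcal{A}^{\gamma_1}_K$). The paper avoids both problems by freezing the time variable, applying It\^o only in space to $b(t_i,X(\cdot))$, and comparing $\beta(b,t_i,u)=\frac{\partial b}{\partial y}(t_i,X(u))\,b(u,X(u))$ with $L_1b(t_i,\tilde X_n^{RM}(t_i))$, so that $\frac{\partial b}{\partial y}$ always carries the frozen argument $t_i$.

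The most consequential omission is the drift increment $\int_{t_i}^{t_{i+1}}\bigl(a(s,X(s))-a(s,X(t_i))\bigr)\rd s$, which your sketch relegates to the ``spatially Lipschitz'' propagation bucket. Bounding it by $K|X(s)-X(t_i)|$ and $\|X(s)-X(t_i)\|_q\leq Ch^{1/2}$ yields only a global $O(n^{-1/2})$, strictly weaker than the claimed $n^{-\min\{\frac12+\gamma_1,\gamma_2\}}$ whenever $\gamma_2>1/2$ --- precisely the regime where the Milstein correction is of any use. The paper must expand this increment by It\^o's formula in the spatial variable, split $\beta(a,s,u)-\beta(a,t_i,u)$ using the time-H\"older regularity of $a$ and of $\frac{\partial a}{\partial y}$ (yielding $n^{-(1/2+\gamma_1)}$), and then run a second discrete-martingale Burkholder argument on the sums $Z_k=\sum_{i\leq k}\int_{t_i}^{t_{i+1}}\Bigl(\int_{t_i}^{s}\beta(a,t_i,u)\rd W(u)\Bigr)\rd s$ to upgrade the remaining contribution from $n^{-1/2}$ to $n^{-1}$. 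Without this extra expansion and martingale step, the right-hand side of your final Gronwall inequality cannot be better than $Cn^{-q/2}$ plus the integral term, and the stated exponent is out of reach.
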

{\bf Proof.} We show upper bound for $ \sup_{t\in[0,T]} \|X(t) - \tilde X_n^{RM} (t)\|_q$, from which the desired result follows.

The solution $X$ can be expressed in the following way
\begin{align*}
	X(t) & = \eta + A(t) + B(t), \\
	A(t) & = \int\limits_0^t \sum_{i=0}^{n-1} a(s,X(s)) \onei(s)\mathrm{d}s, \\
	B(t) & = \int\limits_0^t \sum_{i=0}^{n-1} b(s,X(s)) \onei(s)\mathrm{d}W(s).
\end{align*}
Let us denote by
$$ U_i = (t_i, \tilde X_n^{RM}(t_i)),\quad V_i = (\xi_i, \tilde X_n^{RM}(t_i)).
$$
Then, for the process $\{\tilde X_n^{RM} (t)\}_{t\in [0,T]}$ we can write
\begin{align*}
	\tilde X_n^{RM} (t) & = \eta + \tilde A_n^{RM} (t) + \tilde B_n^{RM}(t), \\
	\tilde A_n^{RM}(t) & = \int\limits_0^t \sum_{i=0}^{n-1} a(V_i) \onei(s)\mathrm{d}s, \\
	\tilde B_n^{RM}(t) & = \int\limits_0^t \sum_{i=0}^{n-1} \Bigl(b(U_i) + \int\limits_{t_i}^s L_1b(U_i)\mathrm{d}W(u)\Bigr)\onei(s)\mathrm{d}W(s).
\end{align*}
Note that the process
\begin{displaymath}
	\Bigl\{\sum_{i=0}^{n-1} \Bigl(b(U_i) + \int\limits_{t_i}^s L_1b(U_i)\mathrm{d}W(u)\Bigr)\onei(s)\Bigr\}_{s\in [0,T]}
\end{displaymath}
is adapted to $\{\tilde\Sigma^n_t\}_{t\in [0,T]}$ and has c\'adl\'ag paths. Hence, the It\^o integral above is well-defined.

We have that
\begin{displaymath}
	\mathbb{E} |A(t) - \tilde A_n^{RM} (t) |^q \leq C \sum_{k=1}^3 \mathbb{E}| \tilde A_{n,k}^{RM} (t)|^q,
\end{displaymath}
where
\begin{align*}
	\mathbb{E}| \tilde A_{n,1}^{RM} (t) |^q & = \mathbb{E} \Bigl| \int\limits_0^t \sum_{i=0}^{n-1}(a(s,X(s))-a(s, X(t_i))) \onei(s)\mathrm{d}s \Bigl|^q, \\ 
	\mathbb{E}| \tilde A_{n,2}^{RM} (t) |^q & = \mathbb{E} \Bigl| \int\limits_0^t \sum_{i=0}^{n-1}(a(s,X(t_i))-a(\xi_i, X(t_i))) \onei(s)\mathrm{d}s \Bigl|^q, \\ 
	\mathbb{E}| \tilde A_{n,3}^{RM} (t) |^q & = \mathbb{E} \Bigl| \int\limits_0^t \sum_{i=0}^{n-1}(a(\xi_i,X(t_i))-a(V_i)) \onei(s)\mathrm{d}s \Bigl|^q .
\end{align*}
It holds that
\begin{equation}
\label{est_AANRM_1}
 \mathbb{E}| \tilde A_{n,3}^{RM} (t)|^q \leq C \int\limits_0^t \sum_{i=0}^{n-1} \mathbb{E} |X(t_i) - \tilde X^{RM}_n(t_i)|^q \onei(s) \mathrm{d}s.
\end{equation}
From the It\^o formula we have
\begin{equation}
\label{Ito_form_f}
f(t, X(s)) - f(t, X(t_i)) = \int\limits_{t_i}^s \alpha(f,t,u)\mathrm{d}u + \int\limits_{t_i}^s \beta(f,t,u) \mathrm{d}W(u),
\end{equation}
where
\begin{eqnarray}
&&\alpha(f,t,u) = \frac{\partial f}{\partial y}(t,X(u))\cdot a(u,X(u))+\frac{1}{2}\frac{\partial^2 f}{\partial y^2}(t,X(u))\cdot b^2(u,X(u)),\\
&&\beta(f,t,u) = \frac{\partial f}{\partial y} (t, X(u)) \cdot b(u, X(u)),
\end{eqnarray}
for $f\in\{a,b\}$, $s,u\in [t_i,t_{i+1}]$, $t\in\{s,t_i\}$. By Lemma \ref{f_bounds} we get that
\begin{displaymath}
	|\alpha(f,t,u)|\leq C(1+|X(u)|^2),
\end{displaymath}
\begin{equation}
\label{beta_est}
	|\beta(f,t,u)|\leq C(1+|X(u)|),
\end{equation}
and
\begin{displaymath}
|\beta(a,t_1,u)-\beta(a,t_2,u)|\leq C(1+|X(u)|^2)\cdot |t_1-t_2|^{\gamma_1},
\end{displaymath}
for $t_1,t_2,u\in [t_i,t_{i+1}]$. Then, we can write that
\begin{displaymath}
	\mathbb{E}| \tilde A_{n,1}^{RM} (t) |^q \leq C \Bigl( \mathbb{E}|\tilde M^{RM}_{n,1} (t)|^q + \mathbb{E} |\tilde M^{RM}_{n,2}(t)|^q  \Bigr), 
\end{displaymath}
where
\begin{align*}	
	\mathbb{E}|\tilde M^{RM}_{n,1}(t)|^q&    = \mathbb{E} \Bigl| \int\limits_0^t \sum_{i=0}^{n-1} \Bigl( \int\limits_{t_i}^s \alpha(a,s,u)\mathrm{d}u \Bigr) \one_{[t_i, t_{i+1})}(s) \rd s\Bigr|^q \\
	\mathbb{E}| \tilde M^{RM}_{n,2}(t)|^q & = \mathbb{E} \Bigl| \int\limits_0^t \sum_{i=0}^{n-1} \Bigl( \int\limits_{t_i} ^s \beta(a,s,u) \mathrm{d}W(u) \Bigr) \one_{[t_i,t_{i+1})}(s) \mathrm{d}s \Bigr|^q.
\end{align*}
Note that for almost all $\omega\in\Omega$ the function
\begin{displaymath}
	[t_i,t_{i+1}]\times [t_i,t_{i+1}]\ni (s,u)\to \alpha(a,s,u)(\omega)\in\mathbb{R}
\end{displaymath}
is continuous. Hence, the parametric indefinite Riemann integral $\displaystyle{\Bigl\{\int\limits_{t_i}^s\alpha(a,s,u)\rd u\Bigr\}_{s\in [t_i,t_{i+1}]}}$ has almost all trajectories continuous.
Moreover, by \eqref{Ito_form_f} for all $s\in [t_i,t_{i+1}]$ it holds that
\begin{displaymath}
	\int\limits_{t_i}^s\beta(a,s,u)\mathrm{d}W(u) = a(s,X(s))-a(s,X(t_i))-\int\limits_{t_i}^s\alpha(a,s,u)\rd u.
\end{displaymath}
Thus the parametric indefinite stochastic It\^o integral $\displaystyle{\Bigl\{\int\limits_{t_i}^s\beta(a,s,u)\mathrm{d}W(u)\Bigr\}_{s\in [t_i,t_{i+1}]}}$ also has continuous modification. Thereby, $\mathbb{E}|\tilde M^{RM}_{n,1}(t)|^q$ and $\mathbb{E}|\tilde M^{RM}_{n,2}(t)|^q$ are well defined.

We have that
\begin{displaymath}
	\mathbb{E}|\tilde M^{RM}_{n,1}(t)|^q \leq T^{q-1} \sum_{i=0}^{n-1} \int\limits_{t_i}^{t_{i+1}} \Bigr((s-t_i)^{q-1}\cdot \int\limits_{t_i}^s \mathbb{E}|\alpha(a,s,u)|^q\mathrm{d}u\Bigl)\mathrm{d}s\leq Cn^{-q}. 
\end{displaymath}
Moreover, for any $t\in [0,T]$  there exists $l\in \{0, \ldots, n-1 \}$ such that $t\in[t_l, t_{l+1}]$ and
\begin{eqnarray}
\label{est_mn2_1}
	&&\mathbb{E}|\tilde M^{RM}_{n,2}(t)|^q\leq C\mathbb{E}\Bigl|\int\limits_{0}^{t_l}\sum\limits_{i=0}^{n-1}\Bigl(\int\limits_{t_i}^{s}(\beta(a,s,u)-\beta(a,t_i,u))\mathrm{d}W(u)\Bigr)\one_{[t_i, t_{i+1})}(s) \rd s\Bigl|^q\notag\\
	&&\quad\quad\quad\quad\quad +C\mathbb{E}\Bigl|\int\limits_{0}^{t_l}\sum\limits_{i=0}^{n-1}\Bigl(\int\limits_{t_i}^{s}\beta(a,t_i,u)\mathrm{d}W(u)\Bigr)\one_{[t_i, t_{i+1})}(s) \rd s\Bigl|^q\notag\\
	&&\quad\quad\quad\quad\quad +C\mathbb{E}\Bigl|\int\limits_{t_l}^t\Bigl(\int\limits_{t_l}^s\beta(a,s,u)\mathrm{d}W(u)\Bigr)\rd s\Bigl|^q.
\end{eqnarray}
By using the Burkholder and H\"older inequalities, together with Lemma \ref{moment_bound_sol}, we obtain
\begin{eqnarray}
\label{est_mn2_2}
	&&\mathbb{E}\Bigl|\int\limits_{0}^{t_l}\sum\limits_{i=0}^{n-1}\Bigl(\int\limits_{t_i}^{s}(\beta(a,s,u)-\beta(a,t_i,u))\mathrm{d}W(u)\Bigr)\one_{[t_i, t_{i+1})}(s) \rd s\Bigl|^q\notag\\
	&&\leq C\sum\limits_{i=0}^{n-1}\int\limits_{t_i}^{t_{i+1}}\Bigl((s-t_i)^{(q/2)-1}\cdot\mathbb{E}\int\limits_{t_i}^s|\beta(a,s,u)-\beta(a,t_i,u)|^q du\Bigr)\rd s\notag\\
	&&\leq C\sum\limits_{i=0}^{n-1}\int\limits_{t_i}^{t_{i+1}}\Bigl((s-t_i)^{(q/2)-1}\cdot\int\limits_{t_i}^s(1+\mathbb{E}|X(u)|^{2q})\cdot (s-t_i)^{q\gamma_1} \rd u\Bigr) \rd s \notag\\
&&\leq Cn^{-q(\frac{1}{2}+\gamma_1)},
\end{eqnarray}
and
\begin{eqnarray}
\label{est_double_Ito_1}
	&&\mathbb{E}\Bigl|\int\limits_{t_l}^t\Bigl(\int\limits_{t_l}^s\beta(a,s,u)\mathrm{d}W(u)\Bigr)\rd s\Bigl|^q\notag\\
	&&\leq C\Bigl(\frac{T}{n}\Bigr)^{q-1}\cdot\int\limits_{t_l}^{t_{l+1}}\Bigl((s-t_l)^{(q/2)-1}\cdot\mathbb{E}\int\limits_{t_l}^s|\beta(a,s,u)|^q\rd u\Bigr)\rd s\notag\\
	&&\leq Cn^{-q+1}\int\limits_{t_l}^{t_{l+1}}\Bigl((s-t_l)^{(q/2)-1}\cdot\int\limits_{t_l}^s(1+\mathbb{E}|X(u)|^q)\rd u\Bigr)\rd s\leq Cn^{-3q/2}.
\end{eqnarray}
Let 
\begin{displaymath}
 Y_i = \int\limits_{t_i}^{t_{i+1}}\Bigl( \int\limits_{t_i}^s \beta(a,t_i,u)\mathrm{d}W(u) \Bigr)\mathrm{d}s \quad{i=0,1,\ldots,n-1},
\end{displaymath}
and
\begin{displaymath}
 Z_k = \sum_{i=0}^k Y_i \qquad k=0,1,\ldots,n-1,
\end{displaymath} 
where $Z_{-1}:=0$. Therefore,
\begin{equation}
\label{est_mn2_3}
	\mathbb{E}\Bigl|\int\limits_{0}^{t_l}\sum\limits_{i=0}^{n-1}\Bigl(\int\limits_{t_i}^{s}\beta(a,t_i,u)\mathrm{d}W(u)\Bigr)\one_{[t_i, t_{i+1})}(s) \rd s\Bigl|^q=\mathbb{E}|Z_{l-1}|^q, \ l \in\{0,1,\ldots,n-1\}.
\end{equation}
Notice that the process $\displaystyle{\Bigl\{\int\limits_{t_i}^s \beta(a,t_i,u)\mathrm{d}W(u)\Bigr\}_{s\in [t_i,t_{i+1}]}}$ 
is adapted to the filtration $\{\Sigma_s\}_{s\in [t_i,t_{i+1}]}$ and has continuous paths. Hence, it is progressively measurable. This and Fubini theorem imply that $Y_i$ is $\Sigma_{t_{i+1}}$-measurable. Furthermore, let $ \mathcal{G}_i := \Sigma_{t_{i+1}}$, $i\in\{0,1,\ldots,n-1\}$.
Then $\{\mathcal{G}_i\}_{i\in\{0,1,\ldots,n-1\}}$ is a filtration and $Z_k$ is $\mathcal{G}_k$ measurable for each $k=0,1,\ldots,n-1$. 
By using the Fubini theorem for conditional expectation (see, for example, \cite{Brks}) and martingale property of It\^o integral  we have  
\begin{displaymath}
	\mathbb{E}(Z_{k+1}-Z_k | \mathcal{G}_k) = \int\limits_{t_{k+1}}^{t_{k+2}} \mathbb{E} \Biggl(\int\limits_{t_{k+1}}^s \beta(a,t_{k+1},u) \mathrm{d}W(u)\bigg|\mathcal{G}_k\Biggr) \mathrm{d}s = 0,
\end{displaymath}
for $k=0,1,\ldots,n-2$.  This implies that $\{Z_k, \mathcal{G}_k\}_{k\in\{0, 1, \ldots, n-1\}}$ is a discrete-time martingale. Therefore, by  using the discrete version of the Burkholder inequality we have for every $k \in \{0,1,\ldots,n-1\}$ that
\begin{displaymath}
 \mathbb{E} |Z_k|^q \leq C_q^q \mathbb{E} \Bigl(\sum_{i=0}^k Y_i^2\Bigr)^{q/2} \leq C_q^q n^{q/2 -1} \sum_{i=0}^{n-1} \mathbb{E} |Y_i|^q.
\end{displaymath}
Moreover, analogously as in \eqref{est_double_Ito_1} we get that
\begin{displaymath}
	\mathbb{E}|Y_i|^q\leq Cn^{-3q/2}, \quad i = 0,1,\ldots,n-1.
\end{displaymath}
Therefore, for any $k = 0, 1, \ldots, n-1$
\begin{equation}
\label{est_mn2_4}
	\mathbb{E}|Z_k|^q \leq C n^{-q}.
\end{equation} 
Combining together \eqref{est_mn2_1}, \eqref{est_mn2_2}, \eqref{est_mn2_3} and \eqref{est_mn2_4}, we have
$$ \mathbb{E}| \tilde M_{n,2}^{RM}(t)| \leq  C n^{-q\min\{\frac{1}{2}+\gamma_1,1\}}.
$$
Therefore, for any $t\in[0,T]$ 
\begin{equation}
\label{est_AARM_2}
 \mathbb{E}| \tilde A^{RM}_{n,1}(t)|^q \leq C n^{-q\min\{\frac{1}{2}+\gamma_1,1\}}.
\end{equation}
We now bound from above $\sup\limits_{t\in [0,T]}\mathbb{E}|\tilde A^{RM}_{n,2}(t)|^q$. The estimation goes analogously as in \cite{PrMo14}, with some minor adjustments needed in order to include the H\"older regularity. For convenience of the reader we present a complete estimation procedure.

We denote by
\begin{displaymath}
	i(t)=\sup\{i=0,1,\ldots,n \ | \ iT/n\leq t\}, 
\end{displaymath}
\begin{displaymath}
	\zeta(t)=i(t)\frac{T}{n},
\end{displaymath}
for $t\in [0,T]$. Now we can write that 
\begin{equation}
	\label{M2_EST_1}
	\mathbb{E}|\tilde A^{RM}_{n,2}(t)|^q\leq 2^{q-1}\Bigl(\mathbb{E}|\tilde A^{RM}_{n,21}(t)|^q+\mathbb{E}|\tilde A^{RM}_{n,22}(t)|^q\Bigr),
\end{equation}
with
\begin{eqnarray}
	&&\mathbb{E}|\tilde A^{RM}_{n,21}(t)|^q=\mathbb{E}\Bigl|\sum_{k=0}^{i(t)-1}\int\limits_{t_k}^{t_{k+1}}\Bigl(a(s,X(t_k))-a(\xi_k,X(t_k))\Bigr)\rd s\Bigl|^q,\\
	&&\mathbb{E}|\tilde A^{RM}_{n,22}(t)|^q=\mathbb{E}\Bigl|\;\int\limits_{\zeta(t)}^t\Bigl(a(s,X(\zeta(t)))-a(\xi_{i(t)},X(\zeta(t)))\Bigr)\rd s\Bigl|^q,
\end{eqnarray}
for all $t\in [0,T]$, where we take $\mathbb{E}|\tilde A^{RM}_{n,22}(T)|^q=0$. Moreover, let
\begin{equation}
	\label{DEF_T_Y_K}
	\tilde Y_k=\int\limits_{t_k}^{t_{k+1}}\Bigl(a(s,X(t_k))-a(\xi_k,X(t_k))\Bigr)\rd s, \ k=0,1,\ldots,n-1,
\end{equation}
and 
\begin{displaymath}
	\tilde Z_j=\sum_{k=0}^j\tilde Y_k, \ j=0,1,\ldots,n-1,
\end{displaymath}
where we set $\tilde Z_{-1}:=0$. Note that
\begin{displaymath}
	|\tilde Y_k|\leq K  \ \Bigl(1+\sup\limits_{0\leq t \leq T}|X(t)|\Bigr) \ (T/n)^{\gamma_1+1},
\end{displaymath}
and conditioned on $\Sigma_{\infty}$ the random variables $(\tilde Y_k)_{k=0}^{n-1}$ are zero mean, independent, and bounded by $\displaystyle{K\Bigl(1+\sup\limits_{0\leq t \leq T}|X(t)|\Bigr)(T/n)^{\gamma_1+1}}$. Therefore, by applying Theorem 4 from \cite{JenNeuen} and Lemma \ref{moment_bound_sol} we have for all $t\in [0,T]$ that
\begin{eqnarray}
	\label{TM_1_EST_4}
\mathbb{E}|\tilde A^{RM}_{n,21}(t)|^q&=&\mathbb{E}|\tilde Z_{i(t)-1}|^q \leq \mathbb{E}\Biggl[\mathbb{E}\Biggl(\max\limits_{0\leq j\leq n-1}|\tilde Z_j|^q \ | \ \Sigma_{\infty}\Biggr)\Biggr]\notag\\
&\leq& C_2 (T/n)^{q(\gamma_1+1)}\cdot n^{q/2}\cdot\mathbb{E}\Bigl(1+\sup\limits_{t\in [0,T]}|X(t)|\Bigr)^q\leq C_3 n^{-q(\gamma_1+\frac{1}{2})},
\end{eqnarray}
where $C_2, C_3>0$ depend only on the parameters of the class $\mathcal{F}(\gamma_1,\gamma_2,q,K)$ and $q$. Moreover, due to the fact that $\Sigma_{\infty}$ and $\sigma(\xi_0,\xi_1,\ldots,\xi_{n-1})$ are independent $\sigma$-fields, we get for all $t\in [0,T)$ that
\begin{eqnarray*}
	\mathbb{E}|\tilde A^{RM}_{n,22}(t)|^q &\leq& (t-\zeta(t))^{q-1}\cdot\int\limits_{\zeta(t)}^{t}\mathbb{E}|a(s,X(\zeta(t)))-a(\xi_{i(t)},X(\zeta(t)))|^q \rd s\notag\\
	&\leq& C_4(t-\zeta(t))^{q-1}\cdot\Bigl(1+\sup\limits_{t\in [0,T]}\mathbb{E}|X(t)|^q\Bigr)\cdot\mathbb{E}\int\limits_{\zeta(t)}^t |s-\xi_{i(t)}|^{q\gamma_1}\rd s,
\end{eqnarray*}
and $\mathbb{E}|\tilde A^{RM}_{n,22}(T)|^q=0$.
Note that for $t\in [0,T)=\bigcup\limits_{i=0}^{n-1}[t_i,t_{i+1})$ we have that $\zeta(t)\leq t<\zeta(t)+h$. In addition, for $t\in [0,T)$ we have that $\xi_{i(t)}$ is uniformly distributed on $[\zeta(t),\zeta(t)+h]$. Hence, $|s-\xi_{i(t)}|\leq h$ for all $t\in [0,T)$ and $s\in [\zeta(t),t]\subset [\zeta(t),\zeta(t)+h)$, which gives for all $t\in [0,T)$ 
\begin{displaymath}
	\mathbb{E}\int\limits_{\zeta(t)}^t |s-\xi_{i(t)}|^{q\gamma_1}\rd s \leq (t-\zeta(t))\cdot h^{q\gamma_1}.
\end{displaymath}
Therefore,
\begin{equation}
\label{TM_2_EST1}
	\mathbb{E}|\tilde A^{RM}_{n,22}(t)|^q\leq C_5 n^{-q(1+\gamma_1)}.
\end{equation}
Using \eqref{M2_EST_1}, \eqref{TM_1_EST_4} and \eqref{TM_2_EST1} we obtain
\begin{equation}
	\label{M2_EST_2}
	\mathbb{E}|\tilde A^{RM}_{n,2}(t)|^q\leq C_6 n^{-q(\gamma_1+\frac{1}{2})},
\end{equation}
for all $t\in [0,T]$. Combining \eqref{est_AANRM_1}, \eqref{est_AARM_2} and \eqref{M2_EST_2} we get
\begin{equation}	
\label{est_AAN}
	\mathbb{E}|A(t)-\tilde A_n^{RM}(t)|^q\leq C_1\int\limits_0^t\sum\limits_{i=0}^{n-1}\mathbb{E}|X(t_i)-\tilde X^{RM}_n(t_i)|^q\onei(s)\rd s+C_2 n^{-q\min\{\frac{1}{2}+\gamma_1,1\}}.
\end{equation}

The analysis of the diffusion part is as follows. For all $t\in [0,T]$
\begin{displaymath}
 \mathbb{E}|B(t) - \tilde B_n^{RM}(t)|^q \leq C \sum_{k=1}^3 \mathbb{E}|\tilde B_{n,k}^{RM}(t)|^q,
\end{displaymath}
where
\begin{eqnarray}
	&&\mathbb{E} |\tilde B_{n,1}^{RM} (t)|^ q = \mathbb{E} \Bigl| \int\limits_0^t \sum_{i=0}^{n-1} \left( b(s,X(s)) - b(t_i, X(s))\right) \one_{[t_i, t_{i+1})}(s)\mathrm{d}W(s)\Bigl|^q, \\
	\label{est_BN2}
	&&\mathbb{E}|\tilde B_{n,2}^{RM} (t)|^ q = \mathbb{E} \Bigl| \int\limits_0^t \sum_{i=0}^{n-1} \Bigl( b(t_i, X(s)) - b(t_i, X(t_i))\notag \\
&&\quad\quad\quad\quad\quad\quad\quad\quad -\int\limits_{t_i}^s L_1 b(U_i)\mathrm{d}W(u)\Bigr)\one_{[t_i, t_{i+1})}(s)\mathrm{d}W(s)\Bigl|^q, \\
	&&\mathbb{E}| \tilde B_{n,3}^{RM} (t)|^q = \mathbb{E} \Bigl| \int\limits_0^t \sum_{i=0}^{n-1} \left( b(t_i, X(t_i)) - b(U_i)\right) \one_{[t_i, t_{i+1})}(s)\mathrm{d}W(s)\Bigl|^q.
\end{eqnarray}
By the Burkholder inequality and Lemma \ref{moment_bound_sol} we have for every $t\in[0,T]$ that
\begin{align*}
	\mathbb{E} |\tilde B_{n,1}^{RM}(t_i)|^q 
	 &\leq C \int\limits_0^T \mathbb{E} \sum_{i=0}^{n-1} |b(s,X(s)) - b(t_i, X(s))|^q \one_{[t_i,t_{i+1})}(s)\mathrm{d}s \\
	 &\leq C \sum\limits_{i=0}^{n-1} \int\limits_{t_i}^{t_{i+1}} \mathbb{E}(1+|X(s)|)^q\cdot (s-t_i)^{q\gamma_2} \mathrm{d}s\leq C n^{-q\gamma_2},
\end{align*}
and
\begin{displaymath}
	\mathbb{E}|\tilde B_{n,3}^{RM} (t)|^q \leq C_2 \int\limits_0^t \sum_{i=0}^{n-1} \mathbb{E}|X(t_i) - \tilde X_n^{RM}(t_i)|^q \one_{[t_i,t_{i+1})}(s)\mathrm{d}s.
\end{displaymath}
From \eqref{Ito_form_f} we get for $s\in[t_i, t_{i+1}]$ that
\begin{eqnarray}
\label{ito_f2}
	&& b(t_i, X(s)) - b(t_i, X(t_i)) - \int\limits_{t_i}^s L_1 b(U_i)\mathrm{d}W(u)\notag \\
	&& = \int\limits_{t_i}^s \alpha(b,t_i,u)\mathrm{d}u + \int\limits_{t_i}^s \Bigl( \beta(b,t_i,u) - L_1 b(U_i) \Bigr) \mathrm{d}W(u).
\end{eqnarray}
Hence, from \eqref{est_BN2}, \eqref{ito_f2},  and by the Burkholder inequality we get
\begin{eqnarray}
	&&\mathbb{E}|\tilde B_{n,2}^{RM}(t)|^q \leq C_1 \mathbb{E} \int\limits_0^t \sum_{i=0}^{n-1} \Bigl| \int\limits_{t_i}^s \alpha(b,t_i,u)\mathrm{d}u \Bigl|^q \one_{[t_i,t_{i+1})}(s)\rd s\notag \\
	\label{est_BRM_21}
	&&+ C_2 \mathbb{E} \int\limits_0^t \sum_{i=0}^{n-1} \Bigl| \int\limits_{t_i}^s \left( \beta(b,t_i,u) - L_1b(U_i) \right) \mathrm{d}W(u)\Bigl|^q \one_{[t_i,t_{i+1})}(s) \mathrm{d}s,
\end{eqnarray}
where 
\begin{eqnarray}
\label{est_BRM_22}
&&\mathbb{E} \int\limits_0^t \sum_{i=0}^{n-1} \Bigl| \int\limits_{t_i}^s \alpha(b,t_i,u)\mathrm{d}u \Bigl|^q \one_{[t_i,t_{i+1})}(s) \mathrm{d}s
 \leq \sum_{i=0}^{n-1}\int\limits_{t_i}^{t_{i+1}} (s-t_i)^{q-1} \int\limits_{t_i}^s \mathbb{E} |\alpha(b,t_i,u)|^q \mathrm{d}u \mathrm{d}s\notag \\
&& \leq C \cdot \Bigl( 1 + \sup_{t\in[0,T]} \mathbb{E}|X(t)|^{2q} \Bigr)\cdot \sum_{i=0}^{n-1} \int\limits_{t_i}^{t_{i+1}} (s-t_i)^q \mathrm{d}s\leq C n^{-q},
\end{eqnarray}
\begin{eqnarray}
\label{est_BRM_23}
&&\mathbb{E} \int\limits_0^t \sum_{i=0}^{n-1} \Bigl| \int\limits_{t_i}^s \left( \beta(b,t_i,u) - L_1b(U_1) \right) \mathrm{d}W(u)\Bigl|^q \one_{[t_i,t_{i+1})}(s) \mathrm{d}s\notag \\
&& \leq C\cdot n^{-(q/2)+1}\cdot\int\limits_0^t \sum_{i=0}^{n-1}\Bigl( \int\limits_{t_i}^s \mathbb{E} |\beta(b,t_i,u)-L_1b(U_i)|^q \mathrm{d}u\Bigr) \one_{[t_i, t_{i+1})}(s)\mathrm{d}s.
\end{eqnarray}
Note that
\begin{eqnarray}
	&&|\beta(b,t_i,u) - L_1 b(U_i)| \leq K^2(1+|X(u)|)\cdot |u-t_i|^{\gamma_2}+K|X(u)-X(t_i)|\notag\\
&&\quad\quad\quad\quad\quad+K|X(t_i)-\tilde X^{RM}_n(t_i)|. 
\end{eqnarray}
and, therefore, for any $s\in[t_i, t_{i+1}]$ we have
\begin{equation}
\label{est_BRM_24}
	\int\limits_{t_i}^s \mathbb{E}|\beta(b,t_i,u) - L_1b(U_i)|^q\mathrm{d}u \leq  \tilde C_1 n^{-q\gamma_2-1} + C_2n^{-(q/2)-1}+ C_3 n^{-1}\mathbb{E}| X(t_i) - \tilde X_n^{RM}(t_i))|^q.
\end{equation}
From \eqref{est_BRM_21}, \eqref{est_BRM_22}, \eqref{est_BRM_23} and \eqref{est_BRM_24} we obtain that
\begin{align*}
	\mathbb{E}|\tilde B_{n,2}^{RM} (t)|^q \leq & C_1 n^{-q} +C_2 n^{-q(\frac{1}{2}+\gamma_2)} +  C_3 \int\limits_0^t \sum_{i=0}^{n-1} \mathbb{E}|X(t_i) - \tilde X_n^{RM}(t_i)|^q \one_{[t_i,t_{i+1})}(s)\mathrm{d}s.
\end{align*}
Hence, for any $t\in[0,T]$ we have
\begin{equation}
	\label{est_BBN}
	\mathbb{E}|B(t)-\tilde B_n^{RM}(t)|^q \leq K_1 n^{-q\gamma_2}+ K_2 \int\limits_0^t\sum_{i=0}^{n-1} \mathbb{E}| X(t_i) - \tilde X_n^{RM}(t_i)|^q \one_{[t_i, t_{i+1})}(s) \mathrm{d}s.
\end{equation}
By \eqref{est_AAN} and \eqref{est_BBN} we get for all $t\in[0,T]$ that  
\begin{displaymath}
	\mathbb{E}|X(t) - \tilde X_n^{RM}(t)|^q \leq C_1 n^{-q\min\{\frac{1}{2}+\gamma_1,\gamma_2\}} +C_2\int\limits_0^t\sum_{i=0}^{n-1} \mathbb{E} \left| 	X(t_i) - \tilde X_n^{RM}(t_i) \right|^q \one_{[t_i, t_{i+1})}(s)\mathrm{d}s,
\end{displaymath}
which implies for all $t\in [0,T]$ that
\begin{displaymath}
	\sup\limits_{0\leq s\leq t}\mathbb{E}|X(s) - \tilde X_n^{RM}(s)|^q\leq C_1 n^{-q\min\{\frac{1}{2}+\gamma_1,\gamma_2\}}+C_2\int\limits_0^t \sup\limits_{0\leq u\leq s}\mathbb{E} | X(u) - \tilde X_n^{RM}(u)|^q \mathrm{d}s.
\end{displaymath}
Finally, by using the Gronwall's inequality we arrive at \eqref{est_XXRM},
%\begin{equation}
%\label{cont_RM_est_1}
 % \sup_{t\in[0,T]} \| X(t) - \tilde X_n^{RM}(t) \|_q \leq Cn^{-\min\{\frac{1}{2}+\gamma_1,\gamma_2\}},
% \end{equation}
 which ends the proof. \ \ \ $\blacksquare$
 %%%%%%%%%%%%%%%%%%%%%%%%%
 \begin{remark}
 In particular, if $\gamma_2=\min\{\frac{1}{2}+\gamma_1,1\}$  then
 \begin{displaymath}
 \sup_{t\in[0,T]} \| X(t) - \tilde X_n^{RM}(t) \|_q \leq Cn^{-\min\{\frac{1}{2}+\gamma_1,1\}},
 \end{displaymath}
 which recovers the analogous result from \cite{KRWU}.
 \end{remark}
%%%%%%%%%%%%%%
\begin{remark} We compare the errors of the classical Euler method $\mathcal{A}^E_n$, randomized Euler algorithm $\mathcal{A}^{RE}_n$, classical Milstein scheme $\mathcal{A}^M_n$, and randomized Milstein algorithm $\mathcal{A}^{RM}_n$ in the class $\mathcal{F}(\gamma_1,\gamma_2,q,K)$. Namely, in the case of exact information about $a$ and $b$, we have that
\begin{eqnarray}
	&&e^{(q)} (\mathcal{A}^E_n, \mathcal{F} (\gamma_1,\gamma_2,q,K), W, V^i, 0,0)=O(n^{-\min\{\gamma_1,\gamma_2,1/2\}}),\notag\\
	&&e^{(q)} (\mathcal{A}^{RE}_n, \mathcal{F} (\gamma_1,\gamma_2,q,K), W, V^i, 0,0)=O(n^{-\min\{1/2,\gamma_2\}}),\notag\\
	&&e^{(q)} (\mathcal{A}^M_n, \mathcal{F} (\gamma_1,\gamma_2,q,K), W, V^i, 0,0)=O(n^{-\min\{\gamma_1,\gamma_2\}}),\notag\\
	&&e^{(q)} (\mathcal{A}^{RM}_n, \mathcal{F} (\gamma_1,\gamma_2,q,K), W, V^i, 0,0)=O(n^{-\min\{\frac{1}{2}+\gamma_1,\gamma_2\}}).
\end{eqnarray}
Hence, if $\gamma\in (0,1/2]$ and $\gamma_2\in (0,1]$ then $\mathcal{A}^E_n$ and $\mathcal{A}^M_n$ have the same error $O(n^{-\min\{\gamma_1,\gamma_2\}})$. Moreover, for $\gamma_1\in (0,1]$ and $\gamma_2\in (0,1/2]$ the methods $\mathcal{A}^{RE}_n$ and $\mathcal{A}^{RM}_n$ have the same error $O(n^{-\gamma_2})$. Finally, for $\gamma_1\in(1/2,1)$ and $\gamma_2\in (1/2,1]$ the randomized Milstein algorithm $\mathcal{A}^{RM}_n$ outperforms $\mathcal{A}^{E}_n$, $\mathcal{A}^{RE}_n$, and $\mathcal{A}^{M}_n$.
\end{remark}
%%%%%%%%%%%%%%%%%%%%%%%%%%%%%%
\subsection{Performance of randomized derivative-free Milstein algorithm for exact information} \label{sec:Perf_randdfMilstein}
\noindent\newline
In this section we analyze the error of the algorithm $\mathcal{A}^{df-RM}_n$ in the case of exact information. Recall that its time-continuous version is denoted by $\tilde X_n^{df-RM}=\{\tilde X_n^{df-RM}(t)\}_{t\in [0,T]}$. 

We now give a proof of the following results. 
\begin{proposition}
	\label{PROP_RMDF_ERR} 
There exists a positive constant $C$, depending only on the parameters of the class $\mathcal{F}(\gamma_1,\gamma_2,q,K)$, such that for all $n\in\mathbb{N}$ and all $(a,b,\eta)\in\mathcal{F}(\gamma_1,\gamma_2,q,K)$ we have
\begin{equation}
\label{est_XXDFRM}
	\sup\limits_{t\in [0,T]}\|X(t)-\tilde X_n^{df-RM} (t)\|_q\leq C n^{-\min\{\frac{1}{2}+\gamma_1,\gamma_2\}},
\end{equation}
and, in particular,
	\begin{displaymath}
		\|X(T)-\mathcal{A}_n^{df-RM}(a,b,\eta,W,0,0)\|_q \leq C n^{-\min\{\frac{1}{2}+\gamma_1,\gamma_2\}}.
	\end{displaymath}
\end{proposition}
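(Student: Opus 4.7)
The plan is to use the randomized Milstein scheme $\tilde X_n^{RM}$ from Section \ref{sec:alg_RandMilstein} as a bridge. By the triangle inequality,
\[
\|X(t) - \tilde X_n^{df-RM}(t)\|_q \leq \|X(t) - \tilde X_n^{RM}(t)\|_q + \|\tilde X_n^{RM}(t) - \tilde X_n^{df-RM}(t)\|_q.
\]
Proposition \ref{PROP_RM_ERR} already delivers the desired rate for the first summand, so everything reduces to showing that the discrepancy $\varepsilon_n(t) := \tilde X_n^{RM}(t) - \tilde X_n^{df-RM}(t)$, which is driven solely by the replacement of $L_1 b$ by $\mathcal{L}_{1,h}b$, is of lower order than $n^{-\min\{1/2+\gamma_1,\gamma_2\}}$. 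Both schemes use the same $\xi_i$ and the same Wiener sample, so I can subtract them in integral form over $[t_i,t_{i+1}]$ and compare termwise.

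For the Milstein contribution, the crucial split is
\begin{align*}
L_1 b(t_i, \tilde X_n^{RM}(t_i)) - \mathcal{L}_{1,h}b(t_i, \tilde X_n^{df-RM}(t_i)) & = \bigl[L_1 b(t_i, \tilde X_n^{RM}(t_i)) - L_1 b(t_i, \tilde X_n^{df-RM}(t_i))\bigr] \\
& \quad + \bigl[L_1 b(t_i, \tilde X_n^{df-RM}(t_i)) - \mathcal{L}_{1,h}b(t_i, \tilde X_n^{df-RM}(t_i))\bigr].
\end{align*}
The first bracket is bounded by $K|\varepsilon_n(t_i)|$ because membership $b \in \mathcal{B}_K^{\gamma_2}$ forces $y \mapsto L_1 b(t,y)$ to be globally Lipschitz; it will contribute a Gronwall-type feedback term. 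For the second bracket, writing $\mathcal{L}_{1,h}b - L_1 b = b\cdot[\Delta_h b - \partial b/\partial y]$ and applying property (v) of $F_K^{\gamma_2}$ via the integrated mean value theorem yields $|\Delta_h b(t,y) - \partial b/\partial y(t,y)| \leq Kh/2$, which combined with $|b(t,y)| \leq C(1+|y|)$ gives the pointwise estimate $|\mathcal{L}_{1,h}b(t,y) - L_1 b(t,y)| \leq C h (1+|y|)$. This is exactly the sort of fact collected in the Appendix. The drift and diffusion differences are controlled by the Lipschitz properties (iii) of $a$ and $b$, giving further feedback of the form $K|\varepsilon_n(t_i)|$.

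Assembling these ingredients, I would apply the Burkholder-Davis-Gundy inequality to the diffusion and Milstein stochastic integrals, using $\|\mathcal{I}_{t_i,t_{i+1}}(W,W)\|_q = O(h)$ and the independence of $\Delta W_i$ and $\mathcal{I}_{t_i,t_{i+1}}(W,W)$ from $\Sigma_{t_i}$, together with an a priori uniform moment bound $\sup_n\sup_i \|\tilde X_n^{df-RM}(t_i)\|_{2q} < \infty$. This produces a recursive inequality of the form
\[
\sup_{0\leq s\leq t} \mathbb{E}|\varepsilon_n(s)|^q \leq C_1 n^{-q} + C_2 \int_0^t \sup_{0\leq u \leq s}\mathbb{E}|\varepsilon_n(u)|^q \, \mathrm{d}s,
\]
and Gronwall's lemma then gives $\sup_{t\in[0,T]}\|\varepsilon_n(t)\|_q = O(n^{-1})$, which is absorbed into Proposition \ref{PROP_RM_ERR} since $\min\{1/2+\gamma_1,\gamma_2\} \leq 1$. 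The main obstacle I anticipate is the uniform moment bound for $\tilde X_n^{df-RM}$: the evaluation $b(t_i, \tilde X_n^{df-RM}(t_i)+h)$ inside $\mathcal{L}_{1,h}b$ could, in a careless induction, propagate a factor $(1+Ch)^n$ through the Milstein increment $\mathcal{L}_{1,h}b \cdot \mathcal{I}_{t_i,t_{i+1}}(W,W)$; one has to exploit $h \leq T$ and the conditional zero-mean property of $\mathcal{I}_{t_i,t_{i+1}}(W,W)$ given $\Sigma_{t_i}$ to get a clean, $n$-independent Lyapunov constant in the recursion. Once this is in place, everything else follows the template of the proof of Proposition \ref{PROP_RM_ERR}.
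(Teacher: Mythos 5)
Your proposal follows essentially the same route as the paper: a triangle inequality through the auxiliary scheme $\tilde X_n^{RM}$, the splitting of $L_1 b(U_i)-\mathcal{L}_{1,h}b(U_i^{df})$ into a Lipschitz feedback term plus an $O(h)(1+|y|)$ discretization term (this is exactly Lemma \ref{DIV_DIFF_PROP1}), Burkholder and Gronwall, and the uniform moment bound of Lemma \ref{BoundRanMilNo}. The only inessential difference is that the paper tracks the discrepancy as $O(n^{-3/2})$ rather than your $O(n^{-1})$; both are absorbed since $\min\{\tfrac12+\gamma_1,\gamma_2\}\leq 1$, and the moment-bound ``obstacle'' you flag is resolved by the standard discrete Gronwall factor $e^{CT}$ exactly as you anticipate.
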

{\bf Proof.} By \eqref{est_XXRM} we have that
\begin{equation}
\label{tineq_est_rmdf}
\sup\limits_{t\in [0,T]} \|X(t)-\tilde {X}_n^{df-RM}(t)\|_q\leq Cn^{-\min\{\frac{1}{2}+\gamma_1,\gamma_2\}}+\sup\limits_{t\in [0,T]} \|\tilde {X}_n^{RM}(t)-\tilde {X}_n^{df-RM}(t)\|_q.
\end{equation}
Hence, we only need to estimate
 $$ \sup_{t\in[0,T]} \| \tilde X_n^{RM}(t) - \tilde {X}_n^{df-RM}(t) \|_q.
$$
Recall that
$$ U_i = (t_i, \tilde X_n^{RM}(t_i)),\quad V_i = (\xi_i, \tilde X_n^{RM}(t_i)).
$$
In addition, let us denote by
$$ U_i^{df} = (t_i, \tilde X_n^{df-RM}(t_i)),\quad V_i^{df} = (\xi_i, \tilde X_n^{df-RM}(t_i)).
$$
We have that for all $t\in [0,T]$
\begin{align*}
	\xndfc (t) & = \eta + \tilde A_n^{df-RM} (t) + \tilde B_n^{df-RM}(t), \\
	\tilde A_n^{df-RM}(t) & = \int\limits_0^t \sum_{i=0}^{n-1} a(V_i^{df}) \onei(s)\mathrm{d}s, \\
	\tilde B_n^{df-RM}(t) & = \int\limits_0^t \sum_{i=0}^{n-1} \Bigl(b(U_i^{df}) + \int\limits_{t_i}^s \mathcal{L}_{1,h}b(U_i^{df})\mathrm{d}W(u)\Bigr)\onei(s)\mathrm{d}W(s).
\end{align*}
Then
\begin{equation}
\label{est_ADFRM}
\mathbb{E}|\tilde A_n^{RM}(t) - \tilde A_n^{df-RM}(t)|^q \leq C_1 \int\limits_0^t \sum_{i=0}^{n-1} \mathbb{E}|\tilde X_n^{RM}(t_i) - \tilde X_n^{df-RM}(t_i)|^q \one_{[t_i, t_{i+1})}(s)\mathrm{d}s,
\end{equation}
and
%\begin{multline*}
% \tilde B_n^{RM}(t) - \tilde B_n^{df-RM}(t) = \int\limits_0^t \sum_{i=0}^{n-1} \bigg( (b(U_i) - b(U_i^{df}))  \\
% + \int\limits_{t_i}^s \left(L_1b(U_i)-\mathcal{L}_{1,h}b(U_i^{df})\right)\mathrm{d}W(u)\bigg)\one_{[t_i, t_{i+1})}(s)\mathrm{d}W(s).
%\end{multline*}
%Therefore,
\begin{align*}
	\mathbb{E}|\tilde B_n^{RM}(t) - \tilde B_n^{df-RM}(t)|^q \leq & C \mathbb{E}\Bigl| \int\limits_0^t\sum_{i=0}^{n-1} (b(U_i)-b(U_i^{df}))\onei(s)\rd W(s)\Bigl|^q \\
	& + C \mathbb{E} \Bigl| \int\limits_0^t \sum_{i=0}^{n-1}\Bigl(\int\limits_{t_i}^s \left( L_1b(U_i) - \mathcal{L}_{1,h}b(U_i^{df}) \right) \rd W(u)\Bigr) \onei(s)\rd W(s)\Bigl|^q.
\end{align*}
Furthermore, by the Burkholder inequality and Lemma \ref{DIV_DIFF_PROP1}
\begin{displaymath}
\mathbb{E}\Bigl|\int\limits_0^t\sum_{i=0}^{n-1} (b(U_i)-b(U_i^{df}))\onei(s)\rd W(s)\Bigl|^q \leq C \int\limits_0^t\sum_{i=0}^{n-1} \mathbb{E}|\tilde X_n^{RM}(t_i) -\tilde X_n^{df-RM}(t_i)|^q \onei(s)\rd s,
\end{displaymath}
\begin{align*}
	& \mathbb{E}\Bigl| \int\limits_0^t \sum_{i=0}^{n-1}\Bigl( \int\limits_{t_i}^s \left( L_1b(U_i) - \mathcal{L}_{1,h}b(U_i^{df})\right)\rd W(u)\Bigr) \onei(s) \rd W(s)\Bigl|^q \\
	& \leq C \int\limits_0^t \sum_{i=0}^{n-1}\Bigl((s-t_i)^{q/2-1}\cdot \int\limits_{t_i}^s \mathbb{E}|L_1b(U_i) - \mathcal{L}_{1,h}b(U_i^{df})|^q \rd u \Bigr) \onei (s)\rd s\\
	&\leq C_1 \int\limits_0^t \sum_{i=0}^{n-1} \mathbb{E}|\tilde X_n^{RM}(t_i) - \tilde X_n^{df-RM}(t_i)|^q \onei(s)\rd s+C_2 n^{-3q/2}\Bigl(1+\sup_{t\in[0,T]} \mathbb{E}|\tilde X_n^{df-RM}(t)|^q \Bigr).
\end{align*}
Therefore,
\begin{align}
\label{est_BBDF_RM}
	\mathbb{E}|\tilde B_n^{RM}(t) - \tilde B_n^{df-RM}(t)|^q \leq & C_1 \int\limits_0^t\sum_{i=0}^{n-1}\mathbb{E}|\tilde X_n^{RM}(t_i) - \tilde X_n^{df-RM}(t_i)|^q \onei(s)\rd s\notag \\
	& + C_2 \Bigl(1+\sup_{t\in[0,T]} \mathbb{E}|\tilde X_n^{df-RM}(t)|^q \Bigr) n^{-3q/2}.
\end{align}
Hence, from \eqref{est_ADFRM} and \eqref{est_BBDF_RM} we get for all $t\in [0,T]$
\begin{align*}
	\mathbb{E}|\tilde X_n^{RM}(t) - \tilde X_n^{df-RM}(t)|^q \leq & C_1 \int\limits_0^t\sum_{i=0}^{n-1}\mathbb{E}|\tilde X_n^{RM}(t_i) - \tilde X_n^{df-RM}(t_i)|^q \onei(s)\rd s \\
	& + C_2 \Bigl(1+\sup_{t\in[0,T]} \mathbb{E}|\tilde X_n^{df-RM}(t)|^q \Bigr) n^{-3q/2}.
\end{align*}
Hence, by the Gronwall's lemma we obtain
\begin{equation}
\label{tineq_est_rmdf2}
	\mathbb{E}|\tilde X_n^{RM}(t) - \tilde X_n^{df-RM}(t)|^q \leq C\Bigl(1+\sup_{t\in[0,T]} \mathbb{E}|\tilde X_n^{df-RM}(t)|^q \Bigr) n^{-3q/2}.
\end{equation}
Therefore, by \eqref{tineq_est_rmdf}, \eqref{tineq_est_rmdf2} and Lemma \ref{BoundRanMilNo} 
\begin{eqnarray}
\label{tineq_est_rmdf3}
	&&\sup\limits_{0\leq t \leq T}\|  X(t) - \tilde X_n^{df-RM}(t) \|_q \leq C_1\Bigl(1+\sup_{t\in[0,T]} \| \tilde X^{df-RM}_n(t) \|_q \Bigr) n^{-3/2}\notag\\
	&&\quad\quad\quad\quad+C_2 n^{-\min\{\frac{1}{2}+\gamma_1,\gamma_2\}}\leq  Cn^{-\min\{\frac{1}{2}+\gamma_1,\gamma_2\}}.
\end{eqnarray}
This ends the proof. \ \ \ $\blacksquare$
\\ \\
%%%%%%%%%%
%%%%%%%%%%
%%%%%%%%%%
Having Proposition \ref{PROP_RMDF_ERR} we are ready to prove Theorem \ref{RMDF_NOIZZ_ERR}.
%%%
\subsection{Proof of Theorem \ref{RMDF_NOIZZ_ERR}}
\noindent\newline
We set
\begin{displaymath}
 {\bar U}_i^{df} = (t_i,  \xndfnc(t_i)),\qquad \bar V_i^{df} = (\xi_i, \xndfnc(t_i)).
\end{displaymath}
The process $\{\xndfnc (t)\}_{t\in [0,T]}$ can be decomposed as follows
\begin{align*}
	\xndfnc (t) & = \eta + {\tilde {\bar A}_n^{df-RM}}(t) + {\tilde {\bar B}_n^{df-RM}}(t),
\end{align*}
where
\begin{eqnarray}
	&&{\tilde {\bar A}_n^{df-RM}}(t)  = \int\limits_0^t \sum_{i=0}^{n-1} \tilde a(\bar V_i^{df}) \onei(s)\mathrm{d}s, \notag\\
	\label{ito_int_BRM}
	&&{\tilde {\bar B}_n^{df-RM}}(t)  = \int\limits_0^t \sum_{i=0}^{n-1} \Bigl(\tilde b({\bar U}_i^{df}) + \int\limits_{t_i}^s \mathcal{L}_{1,h}\tilde b({\bar U}_i^{df})\mathrm{d}W(u)\Bigr)\onei(s)\mathrm{d}W(s).
\end{eqnarray}
Due to Lemma \ref{PROG_RMDFNO_PROC} the process
\begin{displaymath}
	\Bigl\{\sum_{i=0}^{n-1} \Bigl(\tilde b({\bar U}_i^{df}) + \int\limits_{t_i}^s \mathcal{L}_{1,h}\tilde b({\bar U}_i^{df})\mathrm{d}W(u)\Bigr)\onei(s)\Bigr\}_{s\in [0,T]}
\end{displaymath}
is adapted to $\{\tilde\Sigma^n_t\}_{t\in [0,T]}$ and has c\'adl\'ag paths. Hence, the It\^o integral in \eqref{ito_int_BRM} is well-defined.

From \eqref{est_XXDFRM} we have that 
\begin{equation}
\label{tineq_est_rmdf4}
	\sup\limits_{t\in [0,T]}\|X(t)-\xndfnc (t)\|_q\leq C n^{-\min\{\frac{1}{2}+\gamma_1,\gamma_2\}}+\sup\limits_{t\in [0,T]}\|\tilde X_n^{df-RM}(t)-\xndfnc (t)\|_q,
\end{equation}
and we only need to estimate $\sup\limits_{t\in [0,T]}\|\tilde X_n^{df-RM}(t)-\xndfnc (t)\|_q$.
%\begin{displaymath}
%\tilde X_n^{df-RM}(t)-\xndfnc (t)=({\tilde {A}_n^{df-RM}}(t)-{\tilde {\bar A}_n^{df-RM}}(t))+({\tilde {B}_n^{df-RM}}(t)-{\tilde {\bar B}_n^{df-RM}}(t)).
%\end{displaymath}
We have that
\begin{eqnarray}
\label{est_TANADFRM}
 &&\mathbb{E}|{\tilde {A}_n^{df-RM}}(t)-{\tilde {\bar A}_n^{df-RM}}(t)|^q \leq C \int\limits_0^t \sum_{i=0}^{n-1} \mathbb{E}|\tilde X^{df-RM}_n(t_i)-\xndfnc (t_i)|^q\one_{[t_i, t_{i+1})}(s)\mathrm{d}s\notag\\
 &&\quad\quad\quad\quad+C\Bigl(1+\sup\limits_{0\leq t\leq T}\mathbb{E}|\xndfnc (t)|^q\Bigr)\cdot\delta_1^q,
\end{eqnarray}
and, by the Burkholder inequality,
\begin{eqnarray}
	&&\mathbb{E}|\tilde B_n^{df-RM}(t) - \tilde{\bar B}_n^{df-RM}(t)|^q \leq C  \mathbb{E} \Bigl| \int\limits_0^t\sum_{i=0}^{n-1} \left(b(U_i^{df}) - \tilde b(\bar U_i^{df})\right)\onei(s)\rd W(s) \Bigl|^q\notag \\
	&&+ C \mathbb{E} \Bigl| \int\limits_0^t\sum_{i=0}^{n-1} \Bigl(\int\limits_{t_i}^s \left( \mathcal{L}_{1,h}b(U_i^{df}) - \mathcal{L}_{1,h}\tilde b(\bar U_i^{df}) \right) \rd W(u)\Bigr)\onei(s)\rd W(s)\Bigl|^q\notag\\
	 &&\leq C\int\limits_0^t\sum\limits_{i=0}^{n-1}\mathbb{E}|b(U_i^{df})-\tilde b(\bar U_i^{df})|^q\onei(s)\rd s\notag\\
	 &&+C\int\limits_0^t\sum\limits_{i=0}^{n-1}(s-t_i)^{q/2}\cdot\mathbb{E}|\mathcal{L}_{1,h}b(U_i^{df})-\mathcal{L}_{1,h}\tilde b(\bar U_i^{df})|^q\onei(s)\rd s.
\end{eqnarray}
Note that
\begin{displaymath}
	\mathbb{E}|b(U_i^{df})-\tilde b(\bar U_i^{df})|^q\leq C\mathbb{E}|\tilde X^{df-RM}_n(t_i)-\xndfnc (t_i)|^q+C\delta_2^q\Bigl(1+\sup\limits_{0\leq t\leq T}\mathbb{E}|\xndfnc(t)|^q\Bigr).
\end{displaymath}
and, by Lemma \ref{DIV_DIFF_PROP2},
\begin{eqnarray}
	&&\mathbb{E}|\mathcal{L}_{1,h}b(U_i^{df})-\mathcal{L}_{1,h}{\tilde b}(\bar U_i^{df})|^q\leq C\Bigl(1+\sup\limits_{0\leq t\leq T}\mathbb{E}|\tilde X^{df-RM}_n(t)|^q+\sup\limits_{0\leq t\leq T}\mathbb{E}|\xndfnc(t)|^q\Big)\cdot h^q\notag\\
	&&\quad\quad\quad +K\mathbb{E}|\tilde X^{df-RM}_n(t_i)-\xndfnc(t_i)|^q\notag\\
	&&+C\Bigl(1+\sup\limits_{0\leq t\leq T}\mathbb{E}|\xndfnc(t)|^q\Bigr)\cdot (1+\delta_2^q)\cdot\left\{ \begin{array}{ll}
			\delta_2^q,  & \hbox{if} \  p_b\in\mathcal{K}^1_{\rm Lip}\\
			(\delta_2 h^{-1})^q, & \hbox{if} \ p_b\in\mathcal{K}^2.
			\end{array}\right.
\end{eqnarray}
Therefore, we get that for all $t\in [0,T]$
\begin{eqnarray}
\label{est_TBNBDFRM}
	&&\mathbb{E}|\tilde B^{df-RM}_n(t)-\tilde{ {\bar B}}^{df-RM}_n(t)|^q\leq C\int\limits_0^t\sum\limits_{i=0}^{n-1}\mathbb{E}|\tilde X^{df-RM}_n(t_i)-\tilde{ {\bar X}}^{df-RM}_n(t_i)|^q\onei(s)\rd s\notag\\
	&&\quad\quad\quad\quad+C\Bigl(1+\sup\limits_{0\leq t\leq T}\mathbb{E}|\xndfnc (t)|^q\Bigr)\cdot\delta_2^q\notag\\
	&&+C\Bigl(1+\sup\limits_{0\leq t\leq T}\mathbb{E}|\tilde X^{df-RM}_n(t)|^q+\sup\limits_{0\leq t\leq T}\mathbb{E}|\xndfnc(t)|^q\Big)\cdot h^{3q/2}\notag\\
	&&+C\Bigl(1+\sup\limits_{0\leq t\leq T}\mathbb{E}|\xndfnc(t)|^q\Bigr)\cdot (1+\delta_2^q)\cdot
			\left\{ \begin{array}{ll}
			(h^{1/2}\delta_2)^q,  & \hbox{if} \  p_b\in\mathcal{K}^1_{\rm Lip}\\
			(h^{-1/2}\delta_2)^q, & \hbox{if} \ p_b\in\mathcal{K}^2.
			\end{array}\right.
\end{eqnarray}
From \eqref{est_TANADFRM} and \eqref{est_TBNBDFRM} we get
\begin{eqnarray}
&&\mathbb{E}|\tilde X^{df-RM}_n(t)-\tilde{ {\bar X}}^{df-RM}_n(t)|^q\leq C\int\limits_0^t\sum\limits_{i=0}^{n-1}\mathbb{E}|\tilde X^{df-RM}_n(t_i)-\tilde{ {\bar X}}^{df-RM}_n(t_i)|^q\onei(s)\rd s\notag\\
	&&\quad\quad\quad\quad+C\Bigl(1+\sup\limits_{0\leq t\leq T}\mathbb{E}|\xndfnc (t)|^q\Bigr)\cdot(\delta_1^q+\delta_2^q)\notag\\
	&&+C\Bigl(1+\sup\limits_{0\leq t\leq T}\mathbb{E}|\tilde X^{df-RM}_n(t)|^q+\sup\limits_{0\leq t\leq T}\mathbb{E}|\xndfnc(t)|^q\Big)\cdot h^{3q/2}\notag\\
	&&+C\Bigl(1+\sup\limits_{0\leq t\leq T}\mathbb{E}|\xndfnc(t)|^q\Bigr)\cdot (1+\delta_2^q)\cdot
			\left\{ \begin{array}{ll}
			(h^{1/2}\delta_2)^q,  & \hbox{if} \  p_b\in\mathcal{K}^1_{\rm Lip}\\
			(h^{-1/2}\delta_2)^q, & \hbox{if} \ p_b\in\mathcal{K}^2.
			\end{array}\right.
\end{eqnarray}
Thereby, the Gronwall's lemma implies
\begin{eqnarray}
\label{tineq_est_rmdf5}
&&\sup\limits_{0\leq t \leq T}\|\tilde X^{df-RM}_n(t)-\tilde{ {\bar X}}^{df-RM}_n(t)\|_q\leq C_1\Bigl(1+\sup\limits_{0\leq t\leq T}\|\xndfnc (t)\|_q\Bigr)\cdot(\delta_1+\delta_2)\notag\\
	&&+C_2\Bigl(1+\sup\limits_{0\leq t\leq T}\|\tilde X^{df-RM}_n(t)\|_q+\sup\limits_{0\leq t\leq T}\|\xndfnc(t)\|_q\Big)\cdot h^{3/2}\notag\\
	&&+C_3\Bigl(1+\sup\limits_{0\leq t\leq T}\|\xndfnc(t)\|_q\Bigr)\cdot (1+\delta_2)\cdot
			\left\{ \begin{array}{ll}
			h^{1/2}\delta_2,  & \hbox{if} \  p_b\in\mathcal{K}^1_{\rm Lip}\\
			h^{-1/2}\delta_2, & \hbox{if} \ p_b\in\mathcal{K}^2.
			\end{array}\right.
\end{eqnarray}
Combining \eqref{tineq_est_rmdf4}, \eqref{tineq_est_rmdf5}, and Lemma \ref{BoundRanMilNo} we get the thesis. \ \ \ $\blacksquare$
%%%%%%%%%%%%%%%%%%%
\section{Lower bounds and optimality of the randomized derivative-free Milstein algorithm} \label{sec:low}
This section is dedicated to establishing lower bounds on the worst-case error of an arbitrary algorithm from $\Phi_n$ and to prove that the error of randomized derivative-free Milstein algorithm $\bar X^{RM}_n$ asymptotically attains optimality. 

\begin{lemma}
\label{lower_b_lem}
Let $q\in [2,+\infty)$, $\gamma_1, \gamma_2 \in (0,1]$, $K\in (0,+\infty)$, then 
\begin{align*}
	e_n^{(q)}(\mathcal{F}(\gamma_1,\gamma_2,q,K), W, V^i, \delta_1, \delta_2) & = \Omega(\max\{n^{-\min\{1/2 + \gamma_1, \gamma_2\})}, \delta_1, \delta_2\}),
\end{align*}
for $i=1,2$ as $n\to +\infty$, $\max\{\delta_1, \delta_2\} \to 0+$.
\end{lemma}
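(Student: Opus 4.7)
The plan is to decompose the target bound by establishing, separately, the three lower bounds
$$e_n^{(q)}(\mathcal{F}(\gamma_1,\gamma_2,q,K), W, V^i, \delta_1, \delta_2) = \Omega(\delta_1), \ \Omega(\delta_2), \ \Omega(n^{-\min\{1/2+\gamma_1, \gamma_2\}}),$$
whose maximum yields the asserted $\Omega(\max\{\cdot\})$. The first two bounds follow from \textbf{fooling pair} constructions exhibiting inputs that are indistinguishable through the noisy information but whose solutions differ by the desired order. The third is inherited from the exact-information setting ($\delta_1=\delta_2=0$) via the monotonicity $V_a(0)=\{a\}$, $V^i_b(0)=\{b\}$, and is then reduced to classical lower bounds for randomized (stochastic) integration.

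For the $\Omega(\delta_1)$ bound, take $\eta\equiv 0$, $b\equiv 0$, and the two constant drifts $a^{\pm}(t,y):=\pm\delta_1/2$, both lying in $\mathcal{A}^{\gamma_1}_K$. With perturbations $p_{a^{\pm}}:=\mp 1/2 \in\mathcal{K}^1$ both choices produce the identical corrupted drift $\tilde a\equiv 0$; hence every $\mathcal{A}\in\Phi_n$ returns the same value on the two inputs, while $X(a^{\pm},0,0,W)(T)=\pm\delta_1 T/2$. The triangle inequality then gives $\max_{\pm}\|X(a^{\pm},0,0,W)(T)-\mathcal{A}\|_q \geq \delta_1 T/2$. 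The $\Omega(\delta_2)$ bound is analogous: take $a\equiv 0$, $\eta\equiv 0$, $b^{\pm}(t,y):=\pm\delta_2/2$, with constant perturbations $p_{b^{\pm}}=\mp 1/2$ belonging to both $\mathcal{K}^1_{\rm Lip}$ and $\mathcal{K}^2$; the Gaussian solutions $X(0,b^{\pm},0,W)(T)=\pm(\delta_2/2)W(T)$ have $L^q$-distance $\delta_2\|W(T)\|_q=c_q\delta_2\sqrt{T}$.

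For the third bound, the monotonicity reduces matters to $\delta_1=\delta_2=0$. I consider two exact-information subproblems. Restricting to $b\equiv 0$ and $y$-independent drifts $a(t,y)=\phi(t)$ with $\phi$ a H\"older-$\gamma_1$ function satisfying the conditions of $\mathcal{A}^{\gamma_1}_K$, the solution becomes $X(T)=\eta+\int_0^T\phi(s)\,ds$, so the SDE problem reduces to randomized (possibly adaptive) quadrature of a H\"older-$\gamma_1$ integrand, for which the classical Bakhvalov-type lower bound yields $\Omega(n^{-1/2-\gamma_1})$. Restricting to $a\equiv 0$, $\eta\equiv 0$, and $y$-independent diffusions $b(t,y)=\psi(t)$ with $\psi$ H\"older-$\gamma_2$ gives $X(T)=\int_0^T\psi(s)\,dW(s)$, a stochastic integral whose strong $L^q$-approximation rate from $n$ evaluations of $\psi$ and $W$ is known to be $\Omega(n^{-\gamma_2})$. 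The maximum of these two subclass bounds produces $\Omega(n^{-\min\{1/2+\gamma_1,\gamma_2\}})$.

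The principal obstacle is the randomized quadrature lower bound $\Omega(n^{-1/2-\gamma_1})$, since the algorithm may select nodes $\xi_i$ randomly and adaptively. Bakhvalov's averaging technique applies: construct a family $\phi_\epsilon=c\sum_{j=1}^N\epsilon_j\phi_j$ of inputs, where the $\{\phi_j\}$ are disjointly supported unimodal bumps of width $T/N$ and height of order $N^{-\gamma_1}$ (scaled so that each $\phi_\epsilon$ lies in the prescribed H\"older class), and $\epsilon\in\{-1,+1\}^N$ is uniform. For $N$ of order $n$, at least a constant fraction of the bumps receive no evaluation, so conditionally on the observed signs the unseen contribution is a zero-mean Rademacher sum, and a Khintchine/Jensen estimate gives average $L^q$ error of order $\sqrt{N}\cdot N^{-\gamma_1-1}=N^{-1/2-\gamma_1}\sim n^{-1/2-\gamma_1}$; Fubini together with Yao's minimax principle eliminates the gain from randomness and adaptivity. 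A parallel fooling argument using independence of the increments of $W$ handles the $\Omega(n^{-\gamma_2})$ part.
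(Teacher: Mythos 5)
Your proposal is correct and follows essentially the same route as the paper: the lower bound is split into the three subproblem bounds $\Omega(\delta_1)$, $\Omega(\delta_2)$ (via constant fooling perturbations, which the paper delegates to Lemma 3 of the cited work of Morkisz and Przyby{\l}owicz) and $\Omega(n^{-\min\{1/2+\gamma_1,\gamma_2\}})$ (via reduction to randomized quadrature of H\"older functions and to stochastic It\^o integration, for which the paper cites Novak and Heinrich/Przyby{\l}owicz--Morkisz respectively). The only difference is that you spell out the fooling pairs and sketch the Bakhvalov averaging argument explicitly where the paper simply invokes the references.
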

\begin{proof}
Firstly, we recall known results on lower bounds in the case of exact information, i.e. $\delta_1 = \delta_2 = 0$.
For Lebesgue integration of H\"older continuous functions and randomized standard information about integrand, accordingly to \cite{NOV}, we have 
\begin{displaymath}
	e_n^{(q)}(\mathcal{F}(\gamma_1,\gamma_2,q,K), W, V^i, 0, 0)= \Omega(n^{-(1/2 +  \gamma_1)}),
\end{displaymath}
for $i=1,2$.

The following lower bound is established in \cite{PrMo14} and \cite{Hein} for  It\^o integration
\begin{equation}
\label{lower_b_Ito}
e_n^{(q)}(\mathcal{F}(\gamma_1,\gamma_2,q,K), W, V^i, 0, 0) = \Omega (n^{-\gamma_2}),
\end{equation}
for $i=1,2$. (The lower bound \eqref{lower_b_Ito} holds also in the case when the evaluation points for $W$ are chosen in adaptive way, see \cite{Hein} for details.)

As the constant noise on the levels $\pm\delta_1$ and $\pm\delta_2$ is permissible for $a$ and $b$, respectively, we have 
\begin{displaymath}
	e_n^{(q)}(\mathcal{F}(\gamma_1,\gamma_2,q,K), W, V^i, \delta_1, \delta_2) = \Omega(\max\{\delta_1, \delta_2\}),
\end{displaymath}
for $i=1,2$, cf. proof of Lemma 3 in \cite{PMPP17}.

Therefore, as the worst-case error cannot be smaller than the error for subproblems, the proof is completed.
\end{proof}

The following theorem is the main result of the paper and establishes optimality of randomized derivative-free Milstein algorithm.
\begin{theorem}
\label{main_theorem}
Let $q\in [2,+\infty)$, $\gamma_1, \gamma_2 \in (0,1]$, $K\in (0,+\infty)$, then 
\begin{displaymath}
e_n^{(q)}(\mathcal{F}(\gamma_1,\gamma_2,q,K), W, V^1, \delta_1, \delta_2)  = \Theta(\max\{n^{-\min\{1/2 + \gamma_1, \gamma_2\})}, \delta_1, \delta_2\}),
\end{displaymath}
as $n\to +\infty$, $\max\{\delta_1,\delta_2\}\to 0+$. The optimal algorithm is randomized derivative-free Milstein algorithm $\bar X^{df-RM}_n$. 
\end{theorem}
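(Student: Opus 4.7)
The proof proposal proceeds by combining the two directions already assembled in the paper. The lower bound is a direct quotation of Lemma \ref{lower_b_lem}, since
\begin{displaymath}
e_n^{(q)}(\mathcal{F}(\gamma_1,\gamma_2,q,K), W, V^1, \delta_1, \delta_2) \geq e_n^{(q)}(\mathcal{F}(\gamma_1,\gamma_2,q,K), W, V^1, \delta_1, \delta_2)
\end{displaymath}
holds trivially and the lemma delivers the desired $\Omega(\max\{n^{-\min\{1/2+\gamma_1,\gamma_2\}}, \delta_1, \delta_2\})$ rate as $n\to\infty$ and $\max\{\delta_1,\delta_2\}\to 0+$. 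So all the work is on the upper bound side, and there the key observation is that Theorem \ref{RMDF_NOIZZ_ERR}(i) already gives exactly the matching rate for the class $V^1_b(\delta_2)$.

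For the upper bound, the plan is to exhibit a specific algorithm from $\Phi_n$ whose worst-case error meets the lower bound. Since $\mathcal{\bar A}^{df-RM}_n$ uses $4n$ evaluations and therefore lies in $\Phi_{4n}$ rather than $\Phi_n$, I would take $m=\lfloor n/4\rfloor$ and use $\mathcal{\bar A}^{df-RM}_m\in\Phi_{4m}\subseteq\Phi_n$ as the witness algorithm. By Theorem \ref{RMDF_NOIZZ_ERR}(i), applied to $m$ in place of $n$, the worst-case error of $\mathcal{\bar A}^{df-RM}_m$ over $\mathcal{F}(\gamma_1,\gamma_2,q,K)\times V_a(\delta_1)\times V^1_b(\delta_2)$ is bounded by
\begin{displaymath}
C\bigl(m^{-\min\{1/2+\gamma_1,\gamma_2\}}+\delta_1+\delta_2\bigr)\leq C'\bigl(n^{-\min\{1/2+\gamma_1,\gamma_2\}}+\delta_1+\delta_2\bigr),
\end{displaymath}
with $C'$ depending only on the class parameters. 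Taking the infimum over $\Phi_n$ yields the corresponding upper bound on $e_n^{(q)}$.

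To conclude, I would note the elementary equivalence that for nonnegative numbers $A, B, C$ one has $\max\{A,B,C\}\leq A+B+C\leq 3\max\{A,B,C\}$, so the upper bound $O(n^{-\min\{1/2+\gamma_1,\gamma_2\}}+\delta_1+\delta_2)$ coincides, up to a constant, with $O(\max\{n^{-\min\{1/2+\gamma_1,\gamma_2\}},\delta_1,\delta_2\})$. Together with the lower bound from Lemma \ref{lower_b_lem}, this gives the $\Theta$-asymptotics asserted in the statement, and simultaneously identifies $\mathcal{\bar A}^{df-RM}_{\lfloor n/4\rfloor}$ (or just $\mathcal{\bar A}^{df-RM}_n$, up to the constant) as an asymptotically optimal algorithm.

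There is essentially no obstacle in this proof: everything substantial has been done in the preceding sections. The only nontrivial bookkeeping is the evaluation-count adjustment $n\mapsto\lfloor n/4\rfloor$ needed to land in $\Phi_n$ rather than $\Phi_{4n}$, which merely changes the constant in the $O$-bound; and the observation that the upper bound of Theorem \ref{RMDF_NOIZZ_ERR}(i), stated for the $V^1$ noise model, is precisely the one whose optimality the theorem asserts (which is why the statement is restricted to $V^1$ rather than $V^2$, the latter carrying the $e^{C_3(\delta_2 n^{1/2})^q}$ factor from Theorem \ref{RMDF_NOIZZ_ERR}(ii) that prevents a clean $\Theta$-characterization).
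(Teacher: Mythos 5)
Your proposal is correct and follows the same route the paper intends: the lower bound is Lemma \ref{lower_b_lem}, the upper bound is Theorem \ref{RMDF_NOIZZ_ERR}(i) applied to $\mathcal{\bar A}^{df-RM}_{\lfloor n/4\rfloor}\in\Phi_n$, and the sum-versus-max equivalence closes the gap; the paper leaves exactly this combination implicit. The only blemish is your first displayed inequality, which compares a quantity to itself and should simply be replaced by a direct citation of the lemma.
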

Sharp bounds for the class $V^2$ in the case when $\delta_2>0$ remain as an open problem.
%%%%%%%%%%%%%%%%%%%
%%%%%%%%%%%%%%%%%%%
\section{Numerical experiments} \label{sec:num}
We present numerical results for the randomized derivative-free Milstein algorithm $\bar X^{RM}_n$ for the following problem
	\begin{equation*}
	\label{SIMUL_PROB}
			\left\{ \begin{array}{ll}
			dX(t)={\sin (M \cdot X(t) \cdot t ^ {\gamma_1}) \rd t}+{\cos (M \cdot X(t) \cdot t ^ {\gamma_2})\rd W(t)}, &t\in [0,T], \\
			X(0)=1.0,
		\end{array}\right.
	\end{equation*}
where $M = 100$, $\gamma_1$, $\gamma_2 = \min\{\gamma_1 + 0.5,1\}$. The drift and diffusion coefficients are H\"older continuous functions with the H\"older exponents $\gamma_1$ and $\gamma_2$, respectively. The expected theoretical convergence rate for this problem, accordingly to Theorem \ref{RMDF_NOIZZ_ERR}, is $n^{-\gamma_2}$ as $n$ tends to $+\infty$, and $\delta_1$, $\delta_2$ tend to zero.

Note that the exact solution of \eqref{SIMUL_PROB} is not known. Hence, in the simulations we computed in parallel the approximation of the solution for mesh of cardinality $n$ and $1000n$, treating the one on dense mesh as the exact. The rule of thumb for such a choice is as follows. The projected convergence rate is at least $n^{-0.5}$, so the error for $1000n$ should be at least an order of magnitude lower than the error on $n$ points, hence,
\begin{displaymath} 
	\| \bar X^{RM}_{1000n}(T) - \bar X^{RM}_n(T)\|_2 \approx \| X(T) - \bar X^{RM}_n(T)\|_2.
\end{displaymath}
The expectation is estimated as an average taken over $K = 10^4$ trajectories of the driving Wiener process.
The informational noise for the coefficients $a$ and $b$ is simulated as follows. We assume that the corrupting functions $p(t,y)$ for the drift and diffusion coefficients are bounded, i.e. $|p_a(t,y)| \leq \delta_1$ and $|p(t,y)|\leq \delta_2$. The noising procedure was simulated as a realization of a random variable uniformly distributed on $[0,1]$, scaled by the respective precision level $\delta_1$ or $\delta_2$. Each corruption was generated independently. The obtained results are presented in  Figure \ref{figure_1} and Figure \ref{figure_2}. For the obtained numerical results, the empirical convergence rate was also computed (as the linear regression of the $\log$ n vs $\log$ error curve), the summary of those can be find in the Table 1. 

\begin{figure}
\includegraphics[width=0.8\linewidth]{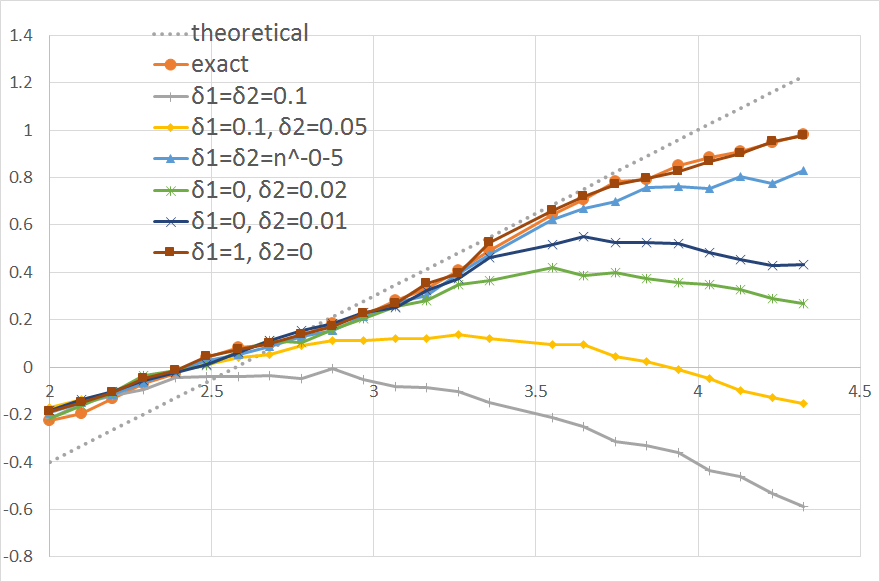}
\caption{Error for exact / noisy information for the case $\gamma_1 = 0.2$, $\gamma_2=0.7$.}\label{figure_1}
\end{figure}

\begin{figure}
\includegraphics[width=0.8\linewidth]{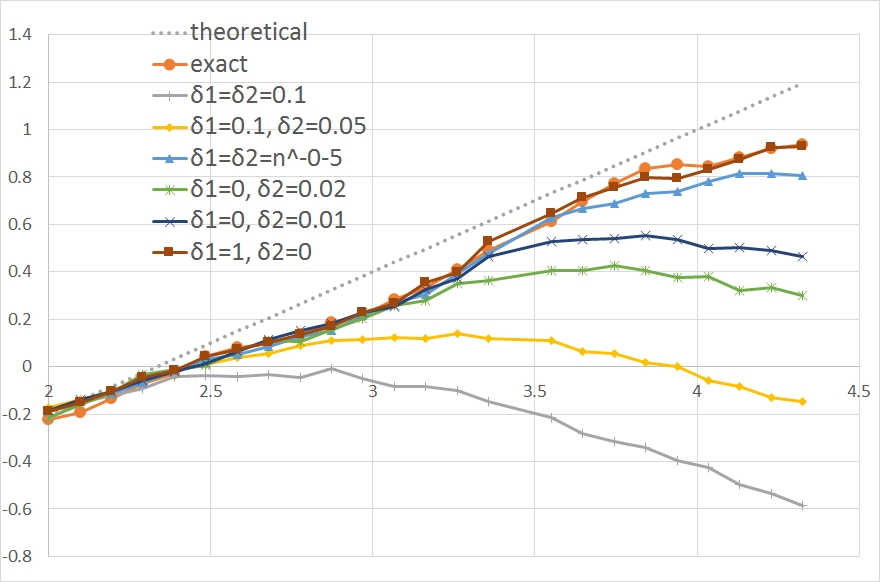}
\caption{Error for exact / noisy information for the case $\gamma_1 = 0.1$, $\gamma_2=0.6$.}\label{figure_2}
\end{figure}

\begin{table}[] \label{table}
\begin{tabular}{|l|c|c|}
\hline
                                                            & $\gamma_1=0.1$, $\gamma_2=0.6$ & $\gamma_1=0.2$, $\gamma_2=0.7$ \\ \hline
theoretical                                                 & 0.6            & 0.7            \\ \hline
exact                                                       & 0.54           & 0.55           \\ \hline
$\delta_1=\delta_2=0.1$                                                   & -0.20          & -0.19          \\ \hline
\begin{tabular}[c]{@{}l@{}}$\delta_1=0.1$,\\   $\delta_2=0.05$\end{tabular} & 0.01           & 0.00           \\ \hline
$\delta_1=\delta_2=n^{-0-5}$                               & 0.48           & 0.48           \\ \hline
$\delta_1=0, \delta_2=0.02$                                               & 0.25           & 0.23           \\ \hline
$\delta_1=0, \delta_2=0.01$                                               & 0.33           & 0.31           \\ \hline
$\delta_1=1, \delta_2=0$                                                  & 0.52           & 0.54           \\ \hline
\end{tabular}
\caption{Empirical convergence rates for various precision levels.}
\end{table}

The obtained numerical results confirm the theoretical results. The most surprising might be the fact that for a set precision on diffusion coefficient and with increasing number of discretization points, the error grows. That indicates that it is likely that the theoretical upper bound for error estimate for analyzed method is sharp with respect to the factor of $\delta_2 n^{1/2}$ in Theorem \ref{RMDF_NOIZZ_ERR}. This behavior is not observed for the set precision $\delta_1$ on the drift coefficient and increasing number of discretization points. 
The results also prove that with precision levels tending to zero with the theoretical convergence rate of the method, the observed convergence rate behaves similarly as by the exact information.

That clearly indicates also that this method cannot be optimal, as we can simply omit part of the information, not letting the error coming from the diffusion coefficient corruption increase the overall error of the method.

\section{Conclusions} \label{sec:con}
We investigated the $n$th minimal error for  pointwise approximation of scalar stochastic differential equations under inexact information about  drift and diffusion coefficients. We provided  implementable derivative-free randomized Milstein scheme $\bar X^{df_RM}_n$ and proved upper bounds on its error. It turned out that in some cases the algorithm $\bar X^{df_RM}_n$ is optimal.  We also reported  numerical experiments. They  confirmed obtained theoretical results.

In this paper we considered only noisy information about drift and diffusion coefficients. In the case when also the evaluations of the Wiener process are corrupted direct application of the technique used in this paper is not possible. Hence, further  extension of research on  the subject is needed  both for the lower and the upper bounds on the  error.
%%%%%%%%%%%%%%%%%%%%%%%%%%%%%%%%%%

%%%%%%%%%%%%%%%%%%%
%%%%%%%%%%%%%%%%%%%
\section{Appendix}
The proofs of the following two lemmas are straightforward and will be omitted.
\begin{lemma} 
\label{f_bounds}
If $f\in F^{\gamma}_K$, $\gamma\in\{\gamma_1,\gamma_2\}$, then for all $(t,y)\in [0,T]\times\mathbb{R}$
\begin{displaymath}
	|f(t,y)|\leq K_1 (1+|y|),
\end{displaymath}
\begin{displaymath}	
	\Bigl|\frac{\partial^j f}{\partial y^j}(t,y) \Bigl| \leq K, \quad j=1,2,
\end{displaymath} 
where $\displaystyle{K_1=K(1+\max\{T^{\gamma_1},T^{\gamma_2}\}})$.
\end{lemma}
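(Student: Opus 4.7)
The plan is to estimate each of the three quantities by a direct telescoping argument that reduces everything to the defining properties (i)--(v) of the class $F^\gamma_K$. Nothing more than the triangle inequality, the mean value theorem (applicable since $f\in C^{0,2}$), and the fact that $\gamma\in\{\gamma_1,\gamma_2\}$ so $T^\gamma\le\max\{T^{\gamma_1},T^{\gamma_2}\}$ will be used.

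First, to establish $|f(t,y)|\le K_1(1+|y|)$, I would insert the values $f(t,0)$ and $f(0,0)$ and write
\begin{displaymath}
|f(t,y)|\le |f(t,y)-f(t,0)|+|f(t,0)-f(0,0)|+|f(0,0)|.
\end{displaymath}
Applying (iii) to the first term, (iv) (with $y=0$) to the second, and (ii) to the third, I obtain $|f(t,y)|\le K|y|+Kt^\gamma+K$. Since $t\in[0,T]$ and $\gamma\in\{\gamma_1,\gamma_2\}$, the upper bound $t^\gamma\le\max\{T^{\gamma_1},T^{\gamma_2}\}$ yields $|f(t,y)|\le K(1+\max\{T^{\gamma_1},T^{\gamma_2}\})(1+|y|)=K_1(1+|y|)$, as claimed.

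For the first-derivative bound, since $f\in C^{0,2}$ property (iii) can be rewritten as a difference quotient: for any $y\in\mathbb{R}$ and $h\neq 0$, $|f(t,y+h)-f(t,y)|/|h|\le K$. Letting $h\to 0$ gives $|\partial f/\partial y(t,y)|\le K$. The second-derivative bound follows in exactly the same way from property (v): the difference quotient of $\partial f/\partial y$ in the $y$-variable is bounded by $K$, and passing to the limit (again using $f\in C^{0,2}$) yields $|\partial^2 f/\partial y^2(t,y)|\le K$.

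No real obstacle is anticipated; indeed, the authors remark that the proof is straightforward and omit it. The only mild care needed is that constants are matched: the telescoping for $|f(t,y)|$ produces the prefactor $K(1+T^\gamma)$, which must be bounded uniformly in $\gamma\in\{\gamma_1,\gamma_2\}$ to obtain the stated constant $K_1$.
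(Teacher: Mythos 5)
Your argument is correct: the telescoping through $f(t,0)$ and $f(0,0)$ combined with (ii)--(iv) gives the linear growth bound with the stated constant $K_1$, and the derivative bounds follow from the Lipschitz conditions (iii) and (v) via difference quotients, using (i) to pass to the limit. The paper omits the proof as straightforward, and this is precisely the standard argument the authors intend.
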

%%%%%%%%%%%%%%%%
\begin{lemma}
\label{DIV_DIFF_PROP1} For all $n\in\mathbb{N}$, $b\in\mathcal{B}_K^{\gamma_2}$, and all $t\in [0,T]$, $y,z\in\mathbb{R}$ it holds
		\begin{displaymath}
			|L_1 b(t,y)| \leq K K_1 (1+|y|),
		\end{displaymath}
		\begin{displaymath}
			|\mathcal{L}_{1,h}b(t,y)|\leq KK_1(1+|y|),
		\end{displaymath}
		\begin{displaymath}
			|L_1b(t,y)- \mathcal{L}_{1,h}b(t,z)| \leq K|y-z|+KK_1(1+|z|)h,
		\end{displaymath}
		where $h=T/n$ and $\displaystyle{K_1=K(1+\max\{T^{\gamma_1},T^{\gamma_2}\}})$.
\end{lemma}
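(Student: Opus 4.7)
The plan is to verify each of the three inequalities separately by direct manipulation of the definitions of $L_1$ and $\mathcal{L}_{1,h}$, combined with Lemma \ref{f_bounds} and the defining Lipschitz property of the class $\mathcal{B}_K^{\gamma_2}$. No obstacle is expected; the proof is essentially a calculation.

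For the first inequality, I would just factor $L_1 b(t,y) = b(t,y) \cdot \frac{\partial b}{\partial y}(t,y)$. Lemma \ref{f_bounds} gives $|b(t,y)| \leq K_1(1+|y|)$ and $|\partial_y b(t,y)| \leq K$, so multiplying yields the bound $KK_1(1+|y|)$.

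For the second inequality, observe that condition (iii) in the definition of $F_K^{\gamma_2}$ implies $|\Delta_h b(t,y)| = |b(t,y+h)-b(t,y)|/h \leq K$. Together with the growth bound on $|b(t,y)|$ from Lemma \ref{f_bounds}, this gives $|\mathcal{L}_{1,h}b(t,y)| = |b(t,y)|\cdot|\Delta_h b(t,y)| \leq KK_1(1+|y|)$.

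For the third inequality, I would split via the triangle inequality:
\begin{equation*}
|L_1 b(t,y) - \mathcal{L}_{1,h} b(t,z)| \leq |L_1 b(t,y) - L_1 b(t,z)| + |L_1 b(t,z) - \mathcal{L}_{1,h} b(t,z)|.
\end{equation*}
The first term is at most $K|y-z|$ by the very definition of $\mathcal{B}_K^{\gamma_2}$. For the second, write $L_1 b(t,z) - \mathcal{L}_{1,h} b(t,z) = b(t,z)\cdot\bigl(\partial_y b(t,z) - \Delta_h b(t,z)\bigr)$. By the mean value theorem applied to $b(t,\cdot)$ on $[z,z+h]$, there exists $\theta\in(0,1)$ with $\Delta_h b(t,z) = \partial_y b(t,z+\theta h)$. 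Condition (v) of the class $F_K^{\gamma_2}$ then yields $|\partial_y b(t,z+\theta h) - \partial_y b(t,z)| \leq K\theta h \leq Kh$. Multiplying by the growth bound $|b(t,z)| \leq K_1(1+|z|)$ from Lemma \ref{f_bounds} gives $|L_1 b(t,z) - \mathcal{L}_{1,h} b(t,z)| \leq KK_1(1+|z|)h$, and combining the two parts completes the proof.
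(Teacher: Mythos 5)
Your proof is correct: each of the three bounds follows exactly as you compute, using Lemma \ref{f_bounds}, the Lipschitz condition (iii), the derivative-Lipschitz condition (v) via the mean value theorem (legitimate since $b\in C^{0,2}$), and the defining property of $\mathcal{B}_K^{\gamma_2}$ for the term $|L_1b(t,y)-L_1b(t,z)|$. The paper omits this proof as ``straightforward,'' and your calculation is precisely the intended elementary argument.
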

%{\bf Proof.\rm} We can notice that there exists $\xi_{t,y,h} \in (y,y+h)$ such that
%\begin{align*}
%	\left| \frac{\partial b}{\partial y}(t,y) - \frac{b(t,y+h) - b(t,y)}{h} \right| = & \left| \frac{\partial b}{\partial y}(t,y) - \frac{\partial b}{\partial y}(t,\xi_{t,y,h}) \right| \leq Kh,
%\end{align*}
%and hence
%\begin{align*}
%	|L_1b(t,y)- \mathcal{L}_{1,h}b(t,y)| = &  \left| b(t,y)\cdot\frac{\partial b}{\partial y}(t,y) - b(t,y)\cdot\frac{b(t,y+h) - b(t,y)}{h} \right| \\ 
%	= & |b(t,y)|\cdot \left|\frac{\partial b}{\partial y}-\frac{b(t,y+h)-b(t,y)}{h}\right| \leq KK_1(1+|y|)h.
%\end{align*}
%Therefore
%\begin{eqnarray}
%	&&|L_1b(t,y)- \mathcal{L}_{1,h}b(t,z)| \leq |L_1b(t,y)-L_1b(t,z)|+|L_1b(t,z)-\mathcal{L}_{1,h}b(t,z)| \notag\\
%	&&\quad\quad\quad\quad\leq K|y-z|+KK_1(1+|z|)h.
%\end{eqnarray}
%This ends the proof. \ \ \ $\blacksquare$
%%%%%%%%%%%%%%%%%%%
In the following lemma we investigate behavior of difference operator $\mathcal{L}_{1,h}$ in the case of inexact information about $b$.
\begin{lemma}
	\label{DIV_DIFF_PROP2} There exists a positive constant $C$, such that for all $n\in\mathbb{N}$, $\delta_1,\delta_2\in [0,1]$, $(a,b)\in\mathcal{A}^{\gamma_1}_K\times\mathcal{B}^{\gamma_2}_K$, $(\tilde a,\tilde b)\in V_a(\delta_1)\times (V^1_b(\delta_2)\cup V^2_b(\delta_2))$, and all $t\in [0,T]$, $y,z\in\mathbb{R}$ it holds
	\begin{equation}
	\label{an_bound1}
		|\tilde a(t,y)|\leq C(1+\delta_1)(1+|y|),
	\end{equation}
	\begin{equation}
	\label{bn_bound1}
		|\tilde b(t,y)|\leq C(1+\delta_2)(1+|y|),
	\end{equation}
		\begin{equation}
		\label{est_diff_op1}
|\mathcal{L}_{1,h}\tilde b(t,y)|  \leq C(1+\delta_2)(1+|y|)\cdot\left\{ \begin{array}{ll}
			1+\delta_2,  & \hbox{if} \  p_b\in\mathcal{K}^1_{\rm Lip},\\
			1+\delta_2h^{-1}, & \hbox{if} \ p_b\in\mathcal{K}^2,
			\end{array}\right.
		\end{equation}
\begin{eqnarray}
\label{est_diff_op2}
&&|\mathcal{L}_{1,h}b(t,y)-\mathcal{L}_{1,h}\tilde b(t,z)|  \leq C(1+|y|+|z|)h+K|y-z|\notag\\
&&\quad\quad\quad+C(1+|z|)\cdot (1+\delta_2)\cdot\left\{ \begin{array}{ll}
			\delta_2,  & \hbox{if} \  p_b\in\mathcal{K}^1_{\rm Lip}\\
			\delta_2 h^{-1}, & \hbox{if} \ p_b\in\mathcal{K}^2.
			\end{array}\right.
\end{eqnarray}
\end{lemma}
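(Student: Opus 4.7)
The plan is to establish the four estimates \eqref{an_bound1}--\eqref{est_diff_op2} in order, reusing Lemma \ref{f_bounds} and Lemma \ref{DIV_DIFF_PROP1} aggressively, and treating the two cases $p_b\in\mathcal{K}^1_{\rm Lip}$ and $p_b\in\mathcal{K}^2$ in parallel in the arguments for $\mathcal{L}_{1,h}\tilde b$.

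First I would dispatch \eqref{an_bound1} and \eqref{bn_bound1}. By Lemma \ref{f_bounds}, $|a(t,y)|\le K_1(1+|y|)$ and $|b(t,y)|\le K_1(1+|y|)$. Since $p_a\in\mathcal{K}^1$ and $p_b\in\mathcal{K}^1_{\rm Lip}\cup\mathcal{K}^2\subset\mathcal{K}^1$, both perturbations satisfy $|p_f(t,y)|\le 1+|y|$, so the triangle inequality yields $|\tilde a(t,y)|\le (K_1+\delta_1)(1+|y|)\le C(1+\delta_1)(1+|y|)$ and the analogous bound for $\tilde b$.

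For \eqref{est_diff_op1} I would first bound the increment ratio $\Delta_h\tilde b(t,y)=\Delta_h b(t,y)+\delta_2\Delta_h p_b(t,y)$. The Lipschitz condition on $b$ in $y$ gives $|\Delta_h b(t,y)|\le K$. If $p_b\in\mathcal{K}^1_{\rm Lip}$, then $|\Delta_h p_b(t,y)|\le 1$, so $|\Delta_h\tilde b(t,y)|\le K+\delta_2\le C(1+\delta_2)$; if $p_b\in\mathcal{K}^2$, then $|p_b(t,y+h)-p_b(t,y)|\le 2$ gives $|\Delta_h\tilde b(t,y)|\le K+2\delta_2 h^{-1}\le C(1+\delta_2 h^{-1})$. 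Multiplying by $|\tilde b(t,y)|\le C(1+\delta_2)(1+|y|)$ from \eqref{bn_bound1} produces \eqref{est_diff_op1}.

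The remaining estimate \eqref{est_diff_op2} is the main obstacle and will be handled by inserting the intermediate point $\mathcal{L}_{1,h}b(t,z)$ and splitting the difference into two contributions. For the exact-coefficient part $\mathcal{L}_{1,h}b(t,y)-\mathcal{L}_{1,h}b(t,z)$, I would telescope through $L_1b(t,y)$ and $L_1b(t,z)$, applying Lemma \ref{DIV_DIFF_PROP1} (with $y=z$) twice to obtain $|\mathcal{L}_{1,h}b(t,y)-L_1b(t,y)|\le KK_1(1+|y|)h$ and similarly at $z$, and applying the defining Lipschitz property of $\mathcal{B}_K^{\gamma_2}$, namely $|L_1b(t,y)-L_1b(t,z)|\le K|y-z|$, which yields the $C(1+|y|+|z|)h+K|y-z|$ contribution. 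For the perturbation part $\mathcal{L}_{1,h}b(t,z)-\mathcal{L}_{1,h}\tilde b(t,z)$ I would use the product decomposition
\begin{displaymath}
b\,\Delta_h b-\tilde b\,\Delta_h\tilde b=(b-\tilde b)\Delta_h b+\tilde b\,\Delta_h(b-\tilde b)=-\delta_2 p_b\,\Delta_h b-\delta_2\tilde b\,\Delta_h p_b,
\end{displaymath}
evaluated at $(t,z)$. The first summand is bounded by $\delta_2 K(1+|z|)$ using $|p_b(t,z)|\le 1+|z|$ and $|\Delta_h b(t,z)|\le K$. The second summand is bounded by $\delta_2|\tilde b(t,z)|\cdot|\Delta_h p_b(t,z)|$, and here the two cases split exactly as in the proof of \eqref{est_diff_op1}: $|\Delta_h p_b|\le 1$ for $\mathcal{K}^1_{\rm Lip}$ and $|\Delta_h p_b|\le 2h^{-1}$ for $\mathcal{K}^2$; combining with $|\tilde b(t,z)|\le C(1+\delta_2)(1+|z|)$ gives the required case-dependent noise term. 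The only subtlety — and the structural reason the lemma has a case distinction at all — is that in the $\mathcal{K}^2$ setting $p_b$ need not be Lipschitz, so the derivative-free quotient inevitably blows up at rate $h^{-1}$; this is unavoidable and accounts for the $\delta_2h^{-1}$ factor, which later drives the $\delta_2 n^{1/2}$ term in Theorem \ref{RMDF_NOIZZ_ERR}(ii).
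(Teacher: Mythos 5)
Your proposal is correct and follows essentially the same route as the paper's proof: the same triangle inequality through the intermediate point $\mathcal{L}_{1,h}b(t,z)$, the same use of Lemma \ref{DIV_DIFF_PROP1} for the exact part, and the same product-rule splitting with the case analysis $|\Delta_h p_b|\leq 1$ versus $|\Delta_h p_b|\leq 2h^{-1}$ driving the two branches. The only (immaterial) differences are that you telescope the exact part through both $L_1b(t,y)$ and $L_1b(t,z)$ rather than only $L_1b(t,z)$, and you use the mirror-image Leibniz decomposition of $b\,\Delta_h b-\tilde b\,\Delta_h\tilde b$.
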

{\bf Proof. \rm} The proof of \eqref{an_bound1} and \eqref{bn_bound1} is straightforward.

We have that
\begin{equation}
\label{est_diff_L_1}
	|\mathcal{L}_{1,h}b(t,y)-\mathcal{L}_{1,h}\tilde b(t,z)|\leq |\mathcal{L}_{1,h}b(t,y)-\mathcal{L}_{1,h}b(t,z)|+|\mathcal{L}_{1,h}b(t,z)-\mathcal{L}_{1,h}\tilde b(t,z)|.
\end{equation}
From Lemma \ref{DIV_DIFF_PROP1} we get that
\begin{eqnarray}
\label{est_diff_L_2}
	&&|\mathcal{L}_{1,h}b(t,y)-\mathcal{L}_{1,h}b(t,z)|\leq |\mathcal{L}_{1,h}b(t,y)-L_1b(t,z)| + |L_1b(t,z)-\mathcal{L}_{1,h}b(t,z)|\notag\\
	&&\quad\quad\quad\quad \leq K|y-z|+K_1K(1+|y|)h+K_1K(1+|z|)h\notag\\
	&&\quad\quad\quad\quad\leq C(1+|y|+|z|)h+K|y-z|.
\end{eqnarray}
Furthermore,
\begin{eqnarray}
	&&|\mathcal{L}_{1,h}b(t,z)-\mathcal{L}_{1,h}\tilde b(t,z)|\leq |b(t,z)|\cdot |\Delta_h b(t,z)-\Delta_h \tilde b(t,z)|+\delta_2\cdot |p_b(t,z)|\cdot |\Delta_h \tilde b(t,z)|\notag\\
	&&\quad\quad\quad\quad \leq K_1(1+|z|)\cdot |\Delta_h b(t,z)-\Delta_h \tilde b(t,z)|+\delta_2\cdot (1+|z|)\cdot |\Delta_h \tilde b(t,z)|.
\end{eqnarray}
Note that
\begin{equation}
\label{est_diff_L_4}
	|\Delta_h \tilde b(t,z)|\leq |\Delta_h b(t,z)|+\delta_2\cdot |\Delta_h p_b(t,z)|\leq K+\delta_2\cdot|\Delta_h p_b(t,z)|.
\end{equation}
Moreover,
\begin{displaymath}
	|\Delta_h b(t,z)-\Delta_h \tilde b(t,z)|=\delta_2\cdot |\Delta_h p_b(t,z)|,
\end{displaymath}
and
\begin{equation}
\label{est_diff_L_5}
	|\Delta_h p_b(t,z)|\leq \left\{ \begin{array}{ll}
			1,  & \hbox{if} \  p_b\in\mathcal{K}^1_{\rm Lip}\\
			2h^{-1}, & \hbox{if} \ p_b\in\mathcal{K}^2.
			\end{array}\right.
\end{equation}
Hence,
\begin{eqnarray}
\label{est_diff_L_3}
	&&|\mathcal{L}_{1,h}b(t,z)-\mathcal{L}_{1,h}\tilde b(t,z)|\leq (K_1+\delta_2)\cdot\delta_2\cdot(1+|z|)\cdot|\Delta_h p_b(t,z)|+K\cdot\delta_2\cdot(1+|z|)\notag\\
	&&\quad\quad\quad\quad\leq C (1+|z|)\cdot (1+\delta_2)\cdot\left\{ \begin{array}{ll}
			\delta_2,  & \hbox{if} \  p_b\in\mathcal{K}^1_{\rm Lip}\\
			\delta_2\cdot h^{-1}, & \hbox{if} \ p_b\in\mathcal{K}^2.
			\end{array}\right.
\end{eqnarray}
Combining \eqref{est_diff_L_1}, \eqref{est_diff_L_2}, and \eqref{est_diff_L_3} we get \eqref{est_diff_op2}. Finally, by \eqref{bn_bound1}, \eqref{est_diff_L_4}, \eqref{est_diff_L_5}, and 
\begin{eqnarray}
	|\mathcal{L}_{1,h}\tilde b(t,y)|\leq C(1+\delta_2)\cdot (1+|y|)\cdot (K+|\Delta_h p_b(t,y)|)
\end{eqnarray}
the result \eqref{est_diff_op1} follows. \ \ \ $\blacksquare$
%%%%%%%%%%%
%%%%%%%%%%%%%
\begin{lemma}
\label{BoundRanMilNo}
\begin{itemize}
\item [(i)] There exists a positive constant $C$, depending only on the parameters of the class $\mathcal{F}(\gamma_1,\gamma_2,q,K)$, such that for all $n\in\mathbb{N}$, $(a,b,\eta)\in \mathcal{F}(\gamma_1,\gamma_2,q,K)$, we have
\begin{equation}
	\label{est_ran_mil_no1}
	\sup_{t\in[0,T]} \mathbb{E} |\tilde{X}_n^{RM}(t)|^q \leq C,
\end{equation}
\begin{equation}
	\label{est_ran_mil_no2}
	\sup_{t\in[0,T]} \mathbb{E} |\tilde{X}_n^{df-RM}(t)|^q \leq C.
\end{equation}
\item [(ii)] There exists a positive constant $C$, depending only on the parameters of the class $\mathcal{F}(\gamma_1,\gamma_2,q,K)$, such that for all $n\in\mathbb{N}$, $\delta_1, \delta_2\in [0,1]$, $(a,b,\eta)\in \mathcal{F}(\gamma_1,\gamma_2,q,K)$, $(\tilde a, \tilde b)\in V_a(\delta_1)\times V^1_b(\delta_2)$ , we have
\begin{equation}
	\label{est_ran_mil_no31}
	\sup_{t\in[0,T]} \mathbb{E} |\tilde{\bar X}_n^{df-RM}(t)|^q \leq C(1+\delta_1^q + \delta_2^q+ \delta_2^{2q}) \ e^{C T(1+\delta_1^q+\delta_2^q+\delta_2^{2q})}.
\end{equation}
\item [(iii)]
There exists a positive constant $C$, depending only on the parameters of the class $\mathcal{F}(\gamma_1,\gamma_2,q,K)$ and $q$, such that for all $n\in\mathbb{N}$, $\delta_1, \delta_2\in [0,1]$, $(a,b,\eta)\in \mathcal{F}(\gamma_1,\gamma_2,q,K)$, $(\tilde a, \tilde b)\in V_a(\delta_1)\times V^2_b(\delta_2)$ , we have
\begin{equation}
	\label{est_ran_mil_no32}
	\sup_{t\in[0,T]} \mathbb{E} |\tilde{\bar X}_n^{df-RM}(t)|^q\leq C(1+\delta_1^q + \delta_2^q+(1+\delta_2^q)\delta_2^qn^{q/2}) \ e^{C T(1+\delta_1^q+\delta_2^q+(1+\delta_2^q)\delta_2^qn^{q/2})}.
\end{equation}
\end{itemize}
\end{lemma}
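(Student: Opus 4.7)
The plan is to prove all three parts by the same standard $q$-th moment argument: decompose the scheme into its drift and diffusion-plus-Milstein-correction integrals, bound the drift part by Hölder and the stochastic part by Burkholder's inequality, insert the appropriate linear-growth bounds from Lemmas \ref{f_bounds}, \ref{DIV_DIFF_PROP1}, \ref{DIV_DIFF_PROP2}, and close the recursion via Gronwall's lemma applied to $\phi(t) := \sup_{0 \le s \le t} \mathbb{E}|\tilde X_n^{\bullet}(s)|^q$.

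For part (i), decompose $\tilde X_n^{RM}(t) = \eta + \tilde A_n^{RM}(t) + \tilde B_n^{RM}(t)$ as in Proposition \ref{PROP_RM_ERR}. Hölder's inequality together with $|a(V_i)| \le K_1(1+|\tilde X_n^{RM}(t_i)|)$ (Lemma \ref{f_bounds}) gives $\mathbb{E}|\tilde A_n^{RM}(t)|^q \le C \int_0^t (1 + \phi(s))\,\rd s$. For $\tilde B_n^{RM}$, a first application of Burkholder to the outer Itô integral reduces the estimate to the $q$-th moments of the integrand $b(U_i) + \int_{t_i}^s L_1 b(U_i)\,\rd W(u)$; a second Burkholder bound for the inner integral, combined with the linear-growth bounds for $b$ (Lemma \ref{f_bounds}) and $L_1 b$ (Lemma \ref{DIV_DIFF_PROP1}), yields $\mathbb{E}|\tilde B_n^{RM}(t)|^q \le C\int_0^t(1+\phi(s))\,\rd s$. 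Gronwall then gives \eqref{est_ran_mil_no1}. The derivative-free scheme $\tilde X_n^{df-RM}$ is handled identically after replacing $L_1 b$ by $\mathcal{L}_{1,h} b$, whose linear-growth bound from Lemma \ref{DIV_DIFF_PROP1} has the same form.

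For parts (ii) and (iii), work with the decomposition $\xndfnc(t) = \eta + \tilde{\bar A}_n^{df-RM}(t) + \tilde{\bar B}_n^{df-RM}(t)$ and insert the noisy bounds from Lemma \ref{DIV_DIFF_PROP2}: $|\tilde a(t,y)| \le C(1+\delta_1)(1+|y|)$ and $|\tilde b(t,y)| \le C(1+\delta_2)(1+|y|)$ contribute the factors $(1+\delta_1)^q$ and $(1+\delta_2)^q$. The key distinction between the two cases arises in the double Itô integral $\int_0^t \sum_i \bigl(\int_{t_i}^s \mathcal{L}_{1,h}\tilde b(\bar U_i^{df})\,\rd W(u)\bigr)\onei(s)\,\rd W(s)$: two iterated Burkholder applications, together with Hölder in time, produce a factor $(s-t_i)^{q/2} \le h^{q/2}$ multiplying $\mathbb{E}|\mathcal{L}_{1,h}\tilde b(\bar U_i^{df})|^q$. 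In case $p_b \in \mathcal{K}^1_{\rm Lip}$, bound \eqref{est_diff_op1} gives $h^{q/2}(1+\delta_2)^{2q}(1+|y|)^q$, whose prefactor produces the summand $\delta_2^q + \delta_2^{2q}$ of \eqref{est_ran_mil_no31}. In case $p_b \in \mathcal{K}^2$, the same bound becomes
\[
h^{q/2}(1+\delta_2)^q(1+\delta_2 h^{-1})^q(1+|y|)^q \le C(1+\delta_2)^q\bigl(h^{q/2} + \delta_2^q h^{-q/2}\bigr)(1+|y|)^q,
\]
and $\delta_2^q h^{-q/2} = T^{-q/2}\delta_2^q n^{q/2}$ generates the $(1+\delta_2^q)\delta_2^q n^{q/2}$ term in \eqref{est_ran_mil_no32}. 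In both cases the combined constant appears both additively and inside the Gronwall coefficient, and Gronwall's lemma delivers the stated exponential bounds.

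The main bookkeeping point -- and the only place where real care is needed -- is in case (iii): the inverse-step factor $h^{-1} = n/T$ that appears in $\mathcal{L}_{1,h}\tilde b$ when $p_b \in \mathcal{K}^2$ must be shown to be tamed by the $h^{q/2}$ generated by the iterated Burkholder bound on the double Itô integral, so that only $\delta_2 n^{1/2}$ (and not $\delta_2 n$) enters the final estimate; the same combined constant must then be tracked consistently through both the additive constant and the Gronwall exponent to produce the advertised $\exp(CT(\cdot))$ form.
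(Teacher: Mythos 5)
Your proposal follows essentially the same route as the paper's proof: the same decomposition into $\eta+\tilde{\bar A}_n^{df-RM}+\tilde{\bar B}_n^{df-RM}$, H\"older for the drift part, iterated Burkholder for the double It\^o integral so that the resulting $h^{q/2}$ tames the $\delta_2 h^{-1}$ growth of $\mathcal{L}_{1,h}\tilde b$ in the $\mathcal{K}^2$ case, and Gronwall to close, with the combined constants tracked both additively and in the exponent. The only step the paper includes that you gloss over is a preliminary induction over the grid points (via the one-step moment bound and the measurability/independence structure of the increments) showing $\sup_{t\in[0,T]}\|\tilde{\bar X}_n^{df-RM}(t)\|_q<+\infty$ a priori, which is needed so that Gronwall's lemma applies to a finite, measurable function; this is routine and does not affect the correctness of your argument.
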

{\bf Proof.\rm} We only show (ii) and (iii), since the proof of (i) is analogous. 

Take $(a,b,\eta)\in\mathcal{F}(\gamma_1,\gamma_2,q,K)$, $(\tilde a, \tilde b)\in V_a(\delta_1)\times (V^1_b(\delta_2)\cup V^2_b(\delta_2))$. By Lemma \ref{PROG_RMDFNO_PROC} we have that the random variables $\tilde b(t_i, \tilde{\bar X}_n^{df-RM}(t_i))$, $\mathcal{L}_{1,h}\tilde b(t_i, \tilde{\bar X}_n^{df-RM}(t_i))$ are $\tilde\Sigma^n_{t_i}$-measurable, while the increment $W(t)-W(t_i)$ is independent of $\tilde\Sigma^n_{t_i}$ for all $i=0,1,\ldots,n-1$ and $t\in [t_i,t_{i+1}]$. Additionally, $\|W(t)-W(t_i)\|_q= m_q\cdot (t-t_i)^{1/2}$, and $\|\mathcal{I}_{t_i,t}(W,W)\|_q\leq\frac{1}{2}(m_{2q}^2+1)(t-t_i)$ for $t\in [t_i,t_{i+1}]$, where $m_q$ is the $q$-th root of the $q$-th absolute moment of a normal variable with zero mean and variance equal to $1$. This and Lemma \ref{DIV_DIFF_PROP2} give, for all $i=0,1,\ldots,n-1$ and  $t\in[t_i, t_{i+1}]$, that
\begin{eqnarray}
\label{aux_est_1}
\|\tilde{\bar X}_n^{df-RM}(t) - \tilde{\bar X}_n^{df-RM}(t_i)\|_q &\leq& \|\tilde a(\xi_i, \tilde{\bar X}_n^{df-RM}(t_i))\|_q\cdot(t-t_i)\notag\\
&& + \|\tilde b(t_i, \tilde{\bar X}_n^{df-RM}(t_i))\|_q \cdot \|W(t)-W(t_i)\|_q\notag \\
&& + \|\mathcal{L}_{1,h}\tilde b(t_i, \tilde{\bar X}_n^{df-RM}(t_i))\|_q\cdot \|\mathcal{I}_{t_i,t}(W,W)\|_q\notag\\
&\leq & C\cdot(1+\delta_1+\delta_2)\cdot(1+\delta_2\cdot\max\{1,h^{-1}\})\times\notag\\
&& (1+\|\tilde{\bar X}_n^{df-RM}(t_i)\|_q)\cdot(t-t_i)^{1/2},
\end{eqnarray} 
where $C>0$ depends only on the parameters of the class $\mathcal{F}(\gamma_1,\gamma_2,q,K)$. Since $\|\tilde{\bar X}_n^{df-RM}(0)\|_q = \|\eta\|_q\leq\|\eta\|_{2q}\leq K$, we get by (\ref{aux_est_1}) and induction that
\begin{equation}
\label{aux_est_2}
	\max\limits_{i\in\{0,1,\ldots,n\}} \|\tilde{\bar X}_n^{df-RM} (t_i)\|_q<+\infty.
\end{equation}
From (\ref{aux_est_1}) and (\ref{aux_est_2}) we get that $\displaystyle{\sup\limits_{t\in[0,T]} \|\tilde{\bar X}_n^{df-RM}(t)\|_q < + \infty}$. Therefore, the function $[0,T]\ni t \mapsto \sup\limits_{0\leq u\leq t} \mathbb{E} |\tilde{\bar X}_n^{df-RM}(u)|^q\in\mathbb{R}_+\cup\{0\}$ is Borel measurable (as a nondecreasing function) and bounded. We now show that we can bound this mapping from above by a finite number that depends only on the parameters of the class $\mathcal{F}(\gamma_1,\gamma_2,q,K)$, $\delta_1$, and $\delta_2$.

We have that for all $t\in [0,T]$
\begin{displaymath}
	\mathbb{E}|\tilde{\bar X}_n^{df-RM}(t)|^q\leq C(\mathbb{E}|\eta|^q+\mathbb{E}|\tilde{\bar A}_n^{df-RM}(t)|^q+\mathbb{E}|\tilde{\bar B}_n^{df-RM}(t)|^q).
\end{displaymath}
From the H\"older inequality we obtain that
\begin{displaymath}
	\mathbb{E}|\tilde{\bar A}_n^{df-RM}(t)|^q\leq C_1(1+\delta_1^q)+C_2(1+\delta_1^q)\int\limits_0^t\sum\limits_{i=0}^{n-1}\mathbb{E}|\tilde{\bar X}_n^{df-RM}(t_i)|^q\onei(s)\rd s.
\end{displaymath}
Moreover, by the Burkholder inequality
\begin{eqnarray}
	&&\mathbb{E}|\tilde{\bar B}_n^{df-RM}(t)|^q
%\leq C\mathbb{E}\Bigl|\int\limits_0^t\sum\limits_{i=0}^{n-1}\tilde b(\bar U_i^{df})\cdot\mathbf{1}_{[t_i,t_{i+1})}(s)\rd W(s)\Bigl|^q\notag\\
%	&&\quad\quad +C\mathbb{E}\Bigl|\int\limits_0^t\sum\limits_{i=0}^{n-1}\Bigl(\int\limits_{t_i}^s\mathcal{L}_{1,h}\tilde b(\bar U_i^{df})\rd W(u)%\Bigr)\cdot\mathbf{1}_{[t_i,t_{i+1})}(s)\rd W(s)\Bigl|^q\notag\\
	\leq C_3(1+\delta_2^q)+C_4(1+\delta_2^q)\int\limits_0^t\sum\limits_{i=0}^{n-1}\mathbb{E}|\tilde{\bar X}_n^{df-RM}(t_i)|^q\onei(s)\rd s\notag\\
	&&+C_5 h^{q/2} \int\limits_0^t\sum\limits_{i=0}^{n-1}\mathbb{E}|\mathcal{L}_{1,h}\tilde b(\bar U_i^{df})|^q\onei(s)\rd s,
\end{eqnarray}
where, by Lemma \ref{DIV_DIFF_PROP2}, we have
\begin{equation}
\label{est_L_1}
	|\mathcal{L}_{1,h}\tilde b(\bar U_i^{df})|^q\leq C_1\cdot (1+\delta_2^q)\cdot(1+|\tilde{\bar X}_n^{df-RM}(t_i)|^q)\cdot\left\{ \begin{array}{ll}
			1+\delta_2^q,  & \hbox{if} \  p_b\in\mathcal{K}^1_{\rm Lip}\\
			1+\delta_2^q\cdot h^{-q}, & \hbox{if} \ p_b\in\mathcal{K}^2.
			\end{array}\right.
\end{equation}
Therefore, if $p_b\in\mathcal{K}^1_{\rm Lip}$ we get
\begin{eqnarray}
	&&\mathbb{E}|\tilde{\bar B}_n^{df-RM}(t)|^q\leq C_1(1+\delta_2^q)+C_2(1+\delta_2^{2q})\notag\\
	&&\quad\quad\quad\quad +C_3(1+\delta_2^q+\delta_2^{2q})\cdot\int\limits_0^t\sum\limits_{i=0}^{n-1}\mathbb{E}|\tilde{\bar X}_n^{df-RM}(t_i)|^q \onei(s)\rd s
\end{eqnarray}
while for $p_b\in\mathcal{K}^2$ it holds that
\begin{eqnarray}
	&&\mathbb{E}|\tilde{\bar B}_n^{df-RM}(t)|^q\leq C_1(1+\delta_2^q)\cdot (1+\delta_2^{q}\cdot h^{-q/2})\notag\\
	&&\quad\quad +C_3(1+\delta_2^q)\cdot (1+\delta_2^q\cdot h^{-q/2})\cdot\int\limits_0^t\sum\limits_{i=0}^{n-1}\mathbb{E}|\tilde{\bar X}_n^{df-RM}(t_i)|^q \onei(s)\rd s.
\end{eqnarray}
By applying the Gronwall's lemma we get the thesis in  (ii) and (iii). \ \ \ $\blacksquare$
\\ \\
%%%%%%%%%%%%%%%%%%%%%%%%%%%%%%%%%%
Finally, we recall the well-known bound on the absolute $L^{2q}$-moment of the solution $X$ of \eqref{PROBLEM}. The following lemma is a direct consequence of Theorems 4.3 and 4.4 in Chapter 2 in \cite{Mao11}.
\begin{lemma}	
\label{moment_bound_sol}
	There exists a positive constant $C$, depending only on the parameters of the class $\mathcal{F}(\gamma_1,\gamma_2,q,K)$, such that for all  $(a,b,\eta)\in \mathcal{F}(\gamma_1,\gamma_2,q,K)$, we have
\begin{displaymath}
	 \Bigl\|\sup\limits_{t\in [0,T]} |X(t)|\Bigr\|_{2q}\leq C.
\end{displaymath}
\end{lemma}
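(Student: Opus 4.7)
The statement is a classical $L^{2q}$-moment bound for solutions of SDEs with globally Lipschitz, linearly growing coefficients, and as indicated it is a direct quotation of Theorems 4.3--4.4 of Chapter 2 in \cite{Mao11}. The plan is therefore to verify that the hypotheses of those theorems are met by any $(a,b,\eta)\in\mathcal{F}(\gamma_1,\gamma_2,q,K)$ and then to invoke the result; existence and uniqueness of the strong solution has already been asserted in the preliminaries.

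First, I would apply Lemma \ref{f_bounds} to both coefficients to obtain the joint linear growth condition
\begin{displaymath}
 |a(t,y)|+|b(t,y)|\leq 2K_1(1+|y|),\qquad (t,y)\in[0,T]\times\mathbb{R},
\end{displaymath}
with $K_1=K(1+\max\{T^{\gamma_1},T^{\gamma_2}\})$ depending only on the parameters of the class. The global Lipschitz condition in $y$, with constant $K$, is part of the definition of $F^{\gamma}_K$, and the moment control $\mathbb{E}|\eta|^{2q}\leq K$ is part of the definition of $\mathcal{J}^q_K$. Hence all hypotheses of the cited theorems are satisfied, and the quoted estimate yields a constant $C$ depending only on $T$, $K$, $q$, $\gamma_1$, $\gamma_2$.

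For completeness, the underlying argument proceeds in the standard way. Localize with stopping times $\tau_N=\inf\{t\geq 0:|X(t)|\geq N\}\wedge T$ so that all expectations are a priori finite, expand $|X(t\wedge\tau_N)|^{2q}$ via the elementary inequality $(x+y+z)^{2q}\leq 3^{2q-1}(|x|^{2q}+|y|^{2q}+|z|^{2q})$ using the integral representation of $X$, bound the drift part by H\"older's inequality, and bound the stochastic integral by the Burkholder--Davis--Gundy inequality. The linear growth estimates convert both contributions into expressions controlled by $C\int_0^t(1+\mathbb{E}\sup_{u\leq s}|X(u\wedge\tau_N)|^{2q})\rd s$, yielding
\begin{displaymath}
 \mathbb{E}\sup_{u\leq t}|X(u\wedge\tau_N)|^{2q}\leq C_1+C_2\int_0^t\mathbb{E}\sup_{u\leq s}|X(u\wedge\tau_N)|^{2q}\rd s,
\end{displaymath}
with constants independent of $N$. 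Gronwall's lemma then gives a uniform-in-$N$ bound $C_1 e^{C_2 T}$, and Fatou's lemma as $N\to\infty$ completes the argument.

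Since the reasoning is entirely standard and is decoupled from the noise model, there is no genuine obstacle; the only point requiring some care is the localization step, which is needed precisely to justify the application of Gronwall's lemma before $L^{2q}$-finiteness of $\sup_t |X(t)|$ has been established.
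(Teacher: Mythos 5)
Your proposal is correct and follows the same route as the paper, which gives no proof at all and simply cites Theorems 4.3 and 4.4 of Chapter 2 in \cite{Mao11}. Your verification that the linear growth and global Lipschitz hypotheses follow from Lemma \ref{f_bounds} and the class definitions, together with the standard localization--Burkholder--Gronwall sketch, is exactly the content the citation is standing in for.
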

%%%%%%%%%%%%%%%%%%%%%%%%%%%%%%%%%%
%%%%%%%%%%%%%%%%%%%%%%%%%%%%%%%%%%
{\bf Acknowledgments.}
This research was supported by the National Science Centre, Poland, under project 2017/25/B/ST1/00945.

\end{document}